\def\Z{{\mathbb Z}}
\def\F{{\mathbb F}}
\DeclareMathOperator{\lcm}{lcm}
\theoremstyle{plain}
\newtheorem{thm}{Theorem}
\newtheorem{cor}[thm]{Corollary}
\newtheorem{prop}[thm]{Proposition}
\newtheorem{lem}[thm]{Lemma}
\theoremstyle{definition}
\newtheorem{Remark}{Remark}
\newtheorem{Example}{Example}
\newtheorem{case}{Case}[thm]
\title{Orbits of Second Order Linear Recurrences over Finite Fields}
\author{Chatchawan Panraksa}
\address{Chatchawan Panraksa; Applied Mathematics Program; Mahidol University International College; Salaya \\
Nakhonpathom, 73170, Thailand.}
\email{chatchawan.pan@mahidol.edu}
\author{Naveen Somasunderam}
\address{Naveen Somasunderam; Department of Mathematics; State University of New York; Plattsburgh, NY, 12901, U.S.A.}
\email{nsoma001@plattsburgh.edu}
\date{August 18, 2024.}
\keywords{Linear Recurrence Sequences; Orbits; Period; Finite Abelian Group; Primitive Roots; Finite Field.}
\subjclass[2010]{11B37, 11B39, 11B50,	11T06, 11T30, 20K01, 37P25}
\begin{document}

\maketitle 

%\textcolor{red}{Article Under Preparation. For Tenure Review Only. Please Do NOT Distribute.}

\section{abstract}

Let $Q$ be the matrix $\displaystyle \begin{pmatrix} a & b \\ 1 & 0 \end{pmatrix}$ in $GL_2(\F_q)$ where $\F_q$ is a finite field, and let $G$ be the finite cyclic group generated by $Q$.  We consider the action of $G$ on the set $\F_q \times \F_q$. In particular, we study certain relationships between the lengths of the non-trivial orbits of $G$,  and their frequency of occurrence. This is done in part by investigating the order of elements of a product in an abelian group when the product has prime power order. For $q$ a prime and $b=1$, the orbits correspond to Fibonacci type linear recurrences modulo $q$ for different initial conditions. We also derive certain conditions under which the roots of the characteristic polynomial of $Q$ are generators of $\F_q^\times$. Examples are included to illustrate the theory.      
\section{Introduction}
\label{sec: Introduction}
Consider the matrix
\[
Q = \begin{pmatrix}
a & b \\
1 & 0
\end{pmatrix},
\]
over a finite commutative ring $R$ with $b$ a unit. Then $Q$ is invertible, and hence it generates a finite cyclic group $G$. We can consider the canonical action of $G$ on $R \times R$ given by
\begin{equation} 
\label{eqn: Action-of-G}
\begin{array}{cccc}
G: & R \times R    & \longrightarrow     & R \times R  \\
     & \begin{pmatrix}
        x_1 \\
        x_0
      \end{pmatrix}  &  \longmapsto          &  Q^n\begin{pmatrix}
        x_1 \\
        x_0
      \end{pmatrix}, 
\end{array}
\end{equation} 
for all $Q^n$ in $G$. Since any second order linear recurrence of the form $x_{n+2} = ax_{n+1} + bx_n$ in $R$
can be written as 
\begin{equation}
\label{eqn: matrix-form}
\begin{pmatrix}
        x_{n+1} \\
        x_n
\end{pmatrix} = Q^n \begin{pmatrix}
        x_1 \\
        x_0
      \end{pmatrix}, 
\end{equation} 
for initial conditions $x_0$ and $x_1$, the orbits of $G$ correspond to a linear recurrence in $R$ for different initial conditions. The periods of the sequence under different initial conditions correspond to the lengths of the orbits of $G$ in $R \times R$. 

Note that the set of zero initial conditions $x_0 = 0$, and $x_1 =0$ corresponds to a fixed point, and we shall call this the \textit{trivial} orbit. \textit{Hence, by the non-trivial orbits of $G$ we shall mean the orbits of $G$ associated to sets of non-zero initial conditions}. In this paper, we are concerned with how the lengths of the non-trivial orbits of $G$ are related when $R$ is any finite field $\F_q$.  In particular, when $R =\F_p$ for some prime $p$ and $b=1$ we have the case of a Fibonacci type sequence modulo $p$ whose periods are given by the lengths of the associated orbits of $G$ in $\F_p\times \F_p$.  

Linear recurrences over finite fields have been previously studied in the literature. See for example, Chapter $6$ of \cite{MR1294139} or \cite{selmer1966linear}. In \cite{MR1294139}, the authors study various relations between linear recurrences given certain divisibility relations between their characteristic polynomials. On the other hand, there is also an extensive literature on linear recurrences over $\Z/m\Z$ for some positive integer $m$. These studies primarily focus on how the periods are related modulo $p^{e+1}$ given the period mod $p^e$ and the distribution of the residues modulo $m$ (see for example \cite{MR364124}, \cite{MR120188}, \cite{MR0314750}, \cite{MR0314751} for an introduction). Classical techniques for studying these sequences involve analyzing the arithmetic properties of special functions such as the rank of apparition, while restricting the initial conditions to be $x_0 =0, x_1 =1$. The study of linear recurrences continues to be an active area of research with important applications (see for example \cite{MR1990179}, \cite{helleseth_sequences}). A recent work in \cite{MR4746536} for example, extends these classical techniques to study polynomial sequences over finite fields. 

Our work focuses on the periods of linear recurrences with a characteristic polynomial $p(x)$ in $\F_q[x]$ given  by 
\[  
p (x) = x^2 - ax - b, 
\]
whose associated matrix is $Q$, under different initial conditions. The analysis presented here depends on whether the characteristic polynomial $p(x)$  splits into distinct roots either over $\F_q$ or a quadratic extension $\F_{q^2}$, or whether it splits over $\F_q$ with repeated roots. 

Our main results are as follows. In the case when $p(x)$ has distinct roots over $\F_q$, the types of orbit lengths depend on the order of $-b$ in $\F_q^\times$. If $-b$ has prime order $r$, we show that the orbits lengths are either all equal to some $l$ where $r \mid l$ or else they are of lengths $l$ and $rl$ where $r \nmid l$. When $b =1$ in $\F_q$ with odd characteristic, this means that either all orbits have the same even length $l$, or there exists orbits of both an odd length $l$ and even length $2l$. 

We extend the above result to the case when $-b$ has a prime power order $r = p^\alpha$ for some prime $p$.  Then all orbits are of some equal length $l$, or of length $l$ and $p^{\alpha - v_p(l)} l$ where $v_p(l)$ is the p-adic valuation of $l$. We show this by first deriving a result relating the orders of elements $\gamma_1, \gamma_2$ and $\gamma_3$ of an abelian group that satisfy the relation $\gamma_1\gamma_2 = \gamma_3$ where the order of $\gamma_3$ is $p^\alpha$.  
If $-b$ is not of prime power order, we show how to construct a matrix $Q$ such that the non-trivial orbits of $G$ have three different length types. This analysis is done in Section \ref{sec: distinct-roots-case}.

In Section \ref{sec: Repeated-roots-case}, we consider the case when $p(x)$ has repeated roots over $\F_q$. We show that the orbit lengths are of the form $l$ and $pl$ for some $l$ where $p$ is the characteristic of the field $\F_q$. 

In each of the above cases, we also calculate the number of orbits of each length type and the total number of orbits, in terms of the smallest non-trivial orbit length $l$. 

In the case of distinct roots over $\F_q$, it is interesting to find conditions when a root of the characteristic polynomial $p(x)$ is a generator of $\F_q^\times$. When $a=b=1$, Shanks \cite{MR0297695} called such a root a Fibonacci primitive root. In \cite{MR1089524}, Phong generalized this notion and called a root of $p(x) = x^2 -ax -b$ to be a Lucas primitive root mod $p^e$ if it was a primitive root mod $p^e$. In this paper, by a Lucas primitive root or LPR we shall mean a root of $p(x) = x^2 - ax -1$ in $\F_q$ that is a generator of $\F_q^\times$. In Section \ref{sec: Lucas-Primitive-Roots}, we use the ideas developed previously to find certain criteria under which $p(x)$ has one or two LPRs. 

When $p(x)$ remains irreducible over $\F_q$, we show that there is only one non-trivial orbit length $l$. In this case, we give an upper bound on $l$ and a lower bound on the total number of non-trivial orbits. We show by some examples that these bounds are sharp. The techniques we use in this case are in the same vein as that presented in \cite{MR2910306}, where the authors are concerned with the periodicity of a sequence mod $p$ for a prime $p$, with initial conditions of $x_0 =0, x_1 =1$. On the other hand, our analysis is concerned with the orbits of $G$ over any finite field $\F_q$ as presented in Section \ref{sec: Irreducible-case}.

\section{Preliminaries}
\label{sec: preliminaries} 
Suppose that $p(x)$ has roots $\gamma_1$ and $\gamma_2$.  Then $p(x) = x^2 -ax -b = (x -\gamma_1)(x-\gamma_2)$ and comparing coefficients we obtain the relations
\begin{equation}
\label{eqn: coeff-formulas-1}
\gamma_1 + \gamma_2 = a,
\end{equation}
and 
\begin{equation}
\label{eqn: coeff-formulas-2}
\gamma_1 \gamma_2 = -b. 
\end{equation} 

In the case when $\gamma_1$ and $\gamma_2$ are distinct, we can consider the matrix $Q$ in diagonalized form either over $\F_q$ or $\F_{q^2}$.  Here, we can write $Q = PDP^{-1}$ where $D$ is a diagonal matrix. This is gives us the following lemma.
\begin{lem}
\label{lem: distinct-roots-possible-lengths}
Let $Q$ have characteristic polynomial $p(x)$ with distinct roots $\gamma_1, \gamma_2$ over $\F_q$ or $\F_{q^2}$. Then the orbit length of a non-zero initial vector $\displaystyle v = \begin{pmatrix}
x_{1} \\
x_{0}
\end{pmatrix} $  in $\F_q \times \F_q$ satisfies one or more of the following three conditions 
\begin{enumerate}[i.]
\item the length is equal to the order of $\gamma_1$, or  
\item the length is equal to the order of $\gamma_2$, or
\item the length is equal to the lcm of the orders of $\gamma_1$ and $\gamma_2$. 
\end{enumerate}
\end{lem}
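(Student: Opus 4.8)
The plan is to exploit the diagonalization $Q = PDP^{-1}$ introduced just before the statement, with $D = \begin{pmatrix} \gamma_1 & 0 \\ 0 & \gamma_2 \end{pmatrix}$ and the columns of $P$ the corresponding eigenvectors; since $\gamma_1 \neq \gamma_2$ such a $P$ exists and is invertible over $\F_q$ or $\F_{q^2}$ as appropriate. The orbit length of $v$ is by definition the least positive integer $n$ with $Q^n v = v$. Writing $Q^n = P D^n P^{-1}$ and setting $w = P^{-1} v = \begin{pmatrix} w_1 \\ w_2 \end{pmatrix}$, the condition $Q^n v = v$ is equivalent to $D^n w = w$, that is, to the pair of scalar equations $\gamma_1^n w_1 = w_1$ and $\gamma_2^n w_2 = w_2$. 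The orbit length is therefore the least $n$ satisfying both equations simultaneously, and the whole lemma reduces to analyzing this least $n$.

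Next I would run the case analysis on whether the coordinates $w_1, w_2$ vanish. Since $P$ is invertible and $v \neq 0$, the vector $w$ is non-zero, so at least one coordinate is non-zero. If exactly one is non-zero, say $w_2 = 0$ and $w_1 \neq 0$, then only $\gamma_1^n = 1$ is binding and the least such $n$ is the order of $\gamma_1$, giving condition (i); the symmetric case $w_1 = 0, w_2 \neq 0$ gives condition (ii). If both coordinates are non-zero, then we need $\gamma_1^n = 1$ and $\gamma_2^n = 1$ simultaneously, whose least solution is $\lcm$ of the two orders, giving condition (iii). These exhaust all possibilities for a non-zero $w$, which proves the statement.

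The one place that needs care, and the only point where the irreducible case behaves differently from the split case, is the meaning of ``$w_i = 0$'' when $\gamma_1, \gamma_2 \in \F_{q^2} \setminus \F_q$: here $P$ and hence $w$ have entries in $\F_{q^2}$ even though $v \in \F_q \times \F_q$. Taking the eigenvector attached to $\gamma_i$ to be $\begin{pmatrix} \gamma_i \\ 1 \end{pmatrix}$, a vanishing coordinate $w_2 = 0$ would force $v$ to be an $\F_{q^2}$-multiple of $\begin{pmatrix} \gamma_1 \\ 1 \end{pmatrix}$; comparing second coordinates shows the multiple lies in $\F_q$, and then the first coordinate forces $\gamma_1 \in \F_q$, a contradiction. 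Hence in the irreducible case every non-zero $v \in \F_q \times \F_q$ has both $w_1, w_2 \neq 0$, so only condition (iii) can occur; this is consistent, since $\gamma_1, \gamma_2$ are Galois conjugates of equal order and all three listed values then coincide. I do not expect a genuine obstacle: once the diagonalization is in place the computation is short linear algebra, and the extension-field bookkeeping just described is the only subtlety.
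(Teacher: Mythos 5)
Your proof is correct and follows essentially the same route as the paper: diagonalize $Q = PDP^{-1}$, pass to $w = P^{-1}v$, and split into cases according to which coordinates of $w$ vanish. Your additional observation that in the irreducible case both coordinates of $w$ are automatically non-zero (so only the lcm case occurs) is a correct refinement that the paper defers to Section \ref{sec: Irreducible-case}, but it does not change the argument for the lemma itself.
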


\begin{proof}
From Equation \ref{eqn: matrix-form}, if the orbit length is $l$ then we have $PD^lP^{-1}v = v$ so that $P(D^l - I)P^{-1}v = 0$.  Multiplying by $P^{-1}$ we get
\begin{eqnarray}
\label{eqn: transformed-basis-eqn}
(D^l - I)P^{-1}v &=& \begin{pmatrix}
\gamma_1^l - 1 & 0 \\
0 & \gamma_2^l - 1 
\end{pmatrix} P^{-1}v \cr  
 &=& 0.
\end{eqnarray}
Since $v$ is non-zero, we have $P^{-1}v$ is not equal to zero. Therefore, Equation \ref{eqn: transformed-basis-eqn} is satisfied if and only if one of the following possibilities occur  
\begin{enumerate}[i.]
\item if the initial vector $v$ is such that $\displaystyle P^{-1}v = \begin{pmatrix}
c \\
0
\end{pmatrix}$ where $c \neq 0$, then the length $l$ is equal to the order of $\gamma_1$.
\item if the initial vector $v$ is such that $\displaystyle P^{-1}v = \begin{pmatrix}
0 \\
c
\end{pmatrix}$ where $c \neq 0$, then the length $l$ is equal to the order of $\gamma_2$.
\item if the initial vector $v$ is such that $\displaystyle P^{-1}v = \begin{pmatrix}
c_1 \\
c_2
\end{pmatrix}$ where $c_1, c_2$ are not equal to $0$, then $l$ is the lcm of the orders of $\gamma_1$ and $\gamma_2$. 
\end{enumerate}
\end{proof}  

To further analyze the relations between the possible orbit lengths when $p(x)$ splits into distinct roots, we must look at the orders of $\gamma_1$ and $\gamma_2$ in the group of units of $\F_q$ or $\F_{q^2}$. As such, we split our analysis of the distinct roots case into two parts. The first part corresponds to when $p(x)$ splits over $\F_q$, and this analysis is carried out in Section \ref{sec: distinct-roots-case}. 

A theorem of Wall (see Theorem 6 of \cite{MR120188}) states that the period of the Fibonacci sequence modulo $p$ divides $p-1$ when $p \equiv \pm 1 \mod 10$ i.e. when $p(x) = x^2 -x -1$ splits into distinct roots over $\F_p $. We can generalize this result to any second order sequence with arbitrary initial conditions over a finite field $\F_q$ using Lemma \ref{lem: distinct-roots-possible-lengths} as follows. 
\begin{cor}
\label{cor: length-divides-q-1}
Let $Q$ be such that its characteristic polynomial $p(x)$ splits into distinct roots over $\F_q$. Then the orbit lengths (and hence the periods of the corresponding sequences) divide $q-1$.     
\end{cor}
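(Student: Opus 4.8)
The plan is to reduce the statement directly to Lemma~\ref{lem: distinct-roots-possible-lengths} together with the cyclic structure of $\F_q^\times$. By that lemma, since $p(x)$ splits into distinct roots $\gamma_1, \gamma_2$ over $\F_q$, every non-trivial orbit length is one of the three values: the order of $\gamma_1$, the order of $\gamma_2$, or $\lcm$ of these two orders. So it suffices to show that each of these three quantities divides $q - 1$.

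First I would observe that both roots lie in $\F_q^\times$ rather than merely in $\F_q$. This is where the hypothesis that $b$ is a unit enters: by Equation~\ref{eqn: coeff-formulas-2} we have $\gamma_1 \gamma_2 = -b$, and since $b$ is a unit in $\F_q$, the product $\gamma_1\gamma_2$ is nonzero, forcing both $\gamma_1$ and $\gamma_2$ to be nonzero. Hence $\gamma_1, \gamma_2 \in \F_q^\times$.

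Next I would invoke the fact that $\F_q^\times$ is a cyclic group of order $q-1$. By Lagrange's theorem, the order of any element of a finite group divides the order of the group, so both the order of $\gamma_1$ and the order of $\gamma_2$ divide $q-1$. Finally, because $q-1$ is a common multiple of these two orders, the least common multiple $\lcm$ of the orders of $\gamma_1$ and $\gamma_2$ also divides $q-1$. Thus all three possible orbit-length values from Lemma~\ref{lem: distinct-roots-possible-lengths} divide $q-1$, which gives the claim.

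There is essentially no hard step here; the argument is a short deduction from the preliminary lemma and elementary group theory. The only point requiring any care is confirming that the roots are genuinely units, so that Lagrange's theorem applies in $\F_q^\times$ rather than just in the additive structure of $\F_q$. Since $Q \in GL_2(\F_q)$ already presupposes $b$ is a unit, this is immediate, and the corollary then follows without further obstacle.
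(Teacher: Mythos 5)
Your proof is correct and follows the same route as the paper's: reduce to the three possible orbit lengths from Lemma~\ref{lem: distinct-roots-possible-lengths} and then apply Lagrange's theorem in $\F_q^\times$. The extra remark that $\gamma_1\gamma_2 = -b \neq 0$ forces both roots to be units is a small point the paper leaves implicit, but it does not change the argument.
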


\begin{proof}
Suppose that the characteristic polynomial of $Q$ splits into distinct roots $\gamma_1$ and $\gamma_2$ over $\F_q$. By Lemma \ref{lem: distinct-roots-possible-lengths} the possible orbits lengths are of the form $|\gamma_1|$, $|\gamma_2|$ or $\lcm(|\gamma_1|,|\gamma_2|)$. Hence, the claim follows by Lagrange's Theorem. 
\end{proof}

We consider the case when $p(x)$ splits into repeated roots over $\F_q$ in Section \ref{sec: Repeated-roots-case}. Note that any diagonalizable $2 \times 2$ matrix with repeated eigenvalues must be a scalar multiple of the identity. Hence, in this case $Q$ is not diagonalizable. Since we assume that the characteristic polynomial splits in $\F_q$ we can consider the Jordan form of $Q$ over $\mathbb{F}_q$ given by $JDJ^{-1}$ where 
\begin{equation} 
\label{eqn: Jordan-matrix}
D = \begin{pmatrix}
\gamma & 1 \\
0 & \gamma 
\end{pmatrix}, 
\end{equation} 
$\gamma$ is a repeated root of $p(x)$ and $J$ is an invertible matrix in $M_{2\times 2}(\mathbb{F}_q)$ (see for example \cite{freidberg2002linear}, Chapter 7 for the Jordan form over an arbitrary field $\F$). It is easily shown by induction that 
\[
D^n = \begin{pmatrix}
\gamma^n & n\gamma^{n-1} \\
0 & \gamma^n 
\end{pmatrix}. 
\]
We use the Jordan form $JDJ^{-1}$ with $D$ given by Equation \ref{eqn: Jordan-matrix} to classify the orbit length relationships in the case of repeated roots. In section \ref{sec: Repeated-roots-case} we will show that when $Q$ has repeated roots over $\F_q$, not all possible orbit lengths will divide $q-1$. Hence, Corollary \ref{cor: length-divides-q-1} does not extend to the repeated roots case.

When $p(x)$ remains irreducible over $\F_q$ we must consider a quadratic extension $\F_{q^2}$ over which $p(x)$ splits, and this analysis is done in  Section \ref{sec: Irreducible-case}.

\section{Distinct roots over $\F_q$}
\label{sec: distinct-roots-case}

In this section, we consider the case when the characteristic polynomial $p(x)$ of $Q$ splits  over $\F_q$ with distinct roots $\gamma_1$ and $\gamma_2$. 

Using the expression $\gamma_1 \gamma_2 = -b$ as given by Equation \ref{eqn: coeff-formulas-2}, our goal is to find any relationships between the orders of $\gamma_1$ and $\gamma_2$ in $\F_q^\times$ given information on the order of $-b$. We do this in the more general setting of an abelian group $F$, with elements   $\gamma_1$, $\gamma_2$ and $\gamma_3$ related by the expression $\gamma_1 \gamma_2 = \gamma_3$. In particular, we show that if $\gamma_3$ has prime or prime power order then the orders of $\gamma_1$ and $\gamma_2$ have certain divisibility relations as given by Lemma \ref{lem: main-residue-case} and Theorem \ref{thm: prime-power-order}. Then considering $-b$ to play the role of $\gamma_3$, we derive certain relations between the possible orbit lengths of the action of $G$ on $\F_q \times \F_q$ as stated in Proposition \ref{prop: r-divisibility-cond} and Theorem \ref{thm: main-residue-case}.

We use the following known facts about the order of elements in a group 
\begin{enumerate}[(1)]
\item If $\gamma_1 \gamma_2 = \gamma_3$ and $\gcd(|\gamma_1|, |\gamma_2|) =1$, then $|\gamma_3|= |\gamma_1||\gamma_2|$. 
\item For any positive integer $k$, we have $\displaystyle 
|\gamma_3^k| = \frac{|\gamma_3|}{\gcd(k,|\gamma_3|)}.  
$
\end{enumerate}

We have the following key Lemma. 
\begin{lem}
\label{lem: main-residue-case}
Let $F$ be an abelian group and $\gamma_1 \gamma_2 = \gamma_3$, with $m = |\gamma_1|, n=|\gamma_2|, r=|\gamma_3|$. Assume without loss of generality that $m \leq n$. Then the following hold
\begin{enumerate}[(a)]
\item If $r \mid m,$ then $m=n$. 
\item If $\gcd(r,m) =1$, then $n = rm$. 
\item $\gcd(n,r) > 1$, provided that $r \neq 1$. 
\end{enumerate}
In particular, if $r$ is a prime then either $m=n$ or $n=rm$. 
\end{lem}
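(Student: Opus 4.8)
The plan is to exploit the symmetry of the defining relation $\gamma_1\gamma_2 = \gamma_3$ by rewriting it in the two equivalent forms $\gamma_2 = \gamma_3\gamma_1^{-1}$ and $\gamma_1 = \gamma_3\gamma_2^{-1}$, and then to extract order information from the two facts stated just before the lemma. The key observation making this work is that $|\gamma_1^{-1}| = |\gamma_1| = m$ and $|\gamma_2^{-1}| = |\gamma_2| = n$, so each rearrangement expresses one generator as a product of $\gamma_3$ with the inverse of another, and fact (1) (coprime orders multiply) applies to such products whenever the relevant gcd is $1$. Throughout, the reduction $m \le n$ is harmless since $\gamma_1$ and $\gamma_2$ enter the relation symmetrically.

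For part (a) I would argue directly rather than through fact (1). Since $F$ is abelian, raising $\gamma_2 = \gamma_3\gamma_1^{-1}$ to the $m$-th power gives $\gamma_2^m = \gamma_3^m\gamma_1^{-m}$. Here $\gamma_1^{-m} = 1$ because $m = |\gamma_1|$, and $\gamma_3^m = 1$ because $r \mid m$ and $r = |\gamma_3|$. Hence $\gamma_2^m = 1$, so $n = |\gamma_2|$ divides $m$, giving $n \le m$; combined with the standing hypothesis $m \le n$ this forces $m = n$. For part (b) I would apply fact (1) to the product $\gamma_2 = \gamma_3\gamma_1^{-1}$: the two factors have orders $r$ and $m$, which are coprime by hypothesis, so fact (1) yields $n = |\gamma_2| = rm$ at once.

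For part (c) I would argue by contradiction using the other rearrangement $\gamma_1 = \gamma_3\gamma_2^{-1}$. Suppose $\gcd(n,r) = 1$. Then the factors $\gamma_3$ and $\gamma_2^{-1}$ have coprime orders $r$ and $n$, so fact (1) gives $m = |\gamma_1| = rn$. Since $r \ne 1$ means $r \ge 2$, this forces $m = rn \ge 2n > n$, contradicting $m \le n$; hence $\gcd(n,r) > 1$. Finally, the \emph{in particular} statement is immediate: when $r$ is prime, $\gcd(r,m)$ is either $1$ or $r$. In the first case part (b) gives $n = rm$, and in the second case $r \mid m$, so part (a) gives $m = n$.

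I do not anticipate a serious obstacle here, as each part reduces to one clean application of a listed fact. The points requiring care are purely bookkeeping: justifying the \emph{without loss of generality} choice $m \le n$ by the symmetry of the relation, and, in each of (a)--(c), selecting the rearrangement in which the available coprimality (or divisibility) hypothesis lines up precisely with the order whose value fact (1) or fact (2) controls. The only place the direction of the inequality $m \le n$ is genuinely used is in closing parts (a) and (c), so I would flag those two uses explicitly.
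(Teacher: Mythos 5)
Your proofs of parts (a) and (b) are essentially identical to the paper's: the same $m$-th power computation for (a) and the same one-line application of the coprime-orders fact to $\gamma_2 = \gamma_3\gamma_1^{-1}$ for (b). Part (c), however, is a genuinely different and cleaner argument. The paper proceeds indirectly: from $\gamma_2^{rm}=1$ it gets $n \mid rm$, invokes Euclid's lemma to conclude $n \mid m$, uses $m \le n$ to force $n=m$, and only then derives $r \mid m$ to contradict $\gcd(n,r)=1$ with $r \ne 1$. You instead apply the coprime-orders fact directly to the other rearrangement $\gamma_1 = \gamma_3\gamma_2^{-1}$, obtaining $m = rn \ge 2n > n$, which contradicts $m \le n$ immediately. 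Your version is shorter, avoids Euclid's lemma entirely, and makes the role of the hypothesis $r \ne 1$ completely transparent (it is exactly what gives $r \ge 2$); the paper's version buys nothing extra here, so yours is arguably preferable. One tiny bookkeeping point you handled correctly but could state: the inequality $2n > n$ uses $n \ge 1$, which holds since orders are positive.
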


\begin{proof}\mbox{} 
\begin{enumerate}[(a)]
\item To show part (a), if $r \mid m$ we have $\displaystyle \gamma_2^m = \frac{\gamma_3^m}{\gamma_1^m} = \gamma_3^m =1$. Hence, $n \mid m$ and since $m \leq n$ we have $m=n$. 
\item We have $\displaystyle \gamma_2 = \gamma_3\gamma_1^{-1}$. Since $|\gamma_1^{-1}| = |\gamma_1| = m$, if $r$ and $m$ are relatively prime then $\displaystyle n = rm$. 

\item Assume $\gcd(n,r) =1$. Note that we have  $\displaystyle \gamma_2^{rm} = \frac{\gamma_3^{rm}}{\gamma_1^{rm}} = 1$, so that $n \mid rm$. Hence, if $\gcd(n,r) =1$ then $n \mid m$ by Euclid's lemma. Since $m \leq n$, we would have to conclude that $n=m$. Hence $\displaystyle 1 = \gamma_2^m =  \frac{\gamma_3^m}{\gamma_1^m} = \gamma_3^m$, so that $r \mid m = n$, a contradiction. 
\end{enumerate}
In the case when $r$ is prime, either $r \mid m$ or $\gcd(r,m) =1$. Hence, either $m=n$ or $n = rm$. 
\end{proof}

%%%%%%%%%%%%

Of particular interest is when $F = \F_q^\times$ and $\gamma_3$ is $-b$. We can use Lemmas \ref{lem: distinct-roots-possible-lengths} and \ref{lem: main-residue-case} to find the following relations between the orbit lengths.

\begin{prop} 
\label{prop: r-divisibility-cond}
Let $\F_q$ be a finite field, and let $\displaystyle Q = \begin{pmatrix} a & b \\ 1 & 0 \end{pmatrix}$ be such that its characteristic polynomial $p(x)$ splits into distinct roots $\gamma_1$ and $\gamma_2$ over $\F_q$ with $r = |-b|$, $m = |\gamma_1|$, and $n = |\gamma_2|$ in $\F_q^\times$. Without loss of generality assume that $m \leq n$. Then the lengths of the non-trivial orbits of $G= \langle Q \rangle$ under its canonical action on $\F_q \times \F_q$ satisfy the following conditions
\begin{enumerate}[(a)]
\item If $r \mid m$ then all orbits have the same length $m$. 
\item If $\gcd(r,m) =1$ then there are two types of orbits lengths $m$ and $rm$. 
\end{enumerate}
\end{prop}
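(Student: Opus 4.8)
The plan is to combine the two preparatory lemmas. Lemma \ref{lem: distinct-roots-possible-lengths} already restricts every non-trivial orbit length to one of the three values $m$, $n$, or $\lcm(m,n)$, so the entire proposition reduces to understanding how $m$ and $n$ are related. The bridge to Lemma \ref{lem: main-residue-case} is Equation \ref{eqn: coeff-formulas-2}, which gives $\gamma_1 \gamma_2 = -b$: taking $\gamma_3 = -b$ places us exactly in the hypotheses of that lemma with $m = |\gamma_1|$, $n = |\gamma_2|$, $r = |-b|$, and $m \le n$. The proof is then a matter of pushing each case of Lemma \ref{lem: main-residue-case} through the trichotomy of Lemma \ref{lem: distinct-roots-possible-lengths}.

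For part (a), if $r \mid m$ then Lemma \ref{lem: main-residue-case}(a) forces $m = n$. Consequently all three candidate lengths collapse: $n = m$ and $\lcm(m,n) = \lcm(m,m) = m$. Since any non-zero $v$ has $P^{-1}v \neq 0$, at least one coordinate of $P^{-1}v$ is non-zero, so $v$ falls under one of cases (i), (ii), (iii) of Lemma \ref{lem: distinct-roots-possible-lengths}, each of which now yields length $m$. Hence every non-trivial orbit has length exactly $m$.

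For part (b), if $\gcd(r,m) = 1$ then Lemma \ref{lem: main-residue-case}(b) gives $n = rm$, whence the third candidate length is $\lcm(m,n) = \lcm(m,rm) = rm$, since $m \mid rm$. Thus the only possible non-trivial orbit lengths are $m$ and $rm$. The step I would treat most carefully is verifying that \emph{both} lengths are genuinely realized, since Lemma \ref{lem: distinct-roots-possible-lengths} only lists possibilities. Because $Q = PDP^{-1}$ is diagonalized over $\F_q$, the columns of $P$ are honest eigenvectors lying in $\F_q \times \F_q$: choosing $v = P\begin{pmatrix} 1 \\ 0 \end{pmatrix}$ gives $P^{-1}v = \begin{pmatrix} 1 \\ 0 \end{pmatrix}$, which is case (i) and produces an orbit of length $m$, while $v = P\begin{pmatrix} 0 \\ 1 \end{pmatrix}$ gives case (ii) and an orbit of length $n = rm$. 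Both vectors are non-zero, so orbits of each type exist, establishing the two length types.

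The only real obstacle is the degenerate boundary $r = 1$, that is $-b = 1$: here $\gcd(r,m) = 1$ holds vacuously yet $n = rm = m$, so the two ``types'' coincide. This case corresponds to $\gamma_2 = \gamma_1^{-1}$ having the same order as $\gamma_1$, and I would either dismiss it as trivial or note that part (b) is to be read with $r > 1$. Away from this edge case, the proposition is a direct consequence of the two lemmas, with the realizability check in part (b) being the only point that requires genuine care rather than bookkeeping.
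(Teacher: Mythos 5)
Your proof follows the same route as the paper's: feed the relation $\gamma_1\gamma_2=-b$ into Lemma \ref{lem: main-residue-case} to pin down $n$ in terms of $m$ and $r$, then read off the orbit lengths from Lemma \ref{lem: distinct-roots-possible-lengths}. Your explicit check that both lengths in part (b) are actually realized (via the eigenvectors $P\bigl(\begin{smallmatrix}1\\0\end{smallmatrix}\bigr)$ and $P\bigl(\begin{smallmatrix}0\\1\end{smallmatrix}\bigr)$), and your remark about the degenerate case $r=1$, are welcome refinements that the paper leaves implicit, but they do not change the argument.
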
  

\begin{proof}
 Applying Lemma \ref{lem: main-residue-case} we have $m = n$ if $r \mid m$, and if $\gcd(r,m) = 1$ we have $n = rm$. Using Lemma \ref{lem: distinct-roots-possible-lengths} if $r \mid m$ all orbits are of equal length $m$, and if $\gcd(r,m) = 1$ we have $\lcm(n,m) = n$ so that there are orbits of length $m$ and $rm$. 
\end{proof}

When the order $r$ of $-b$ is a prime, then the orbit relations stated in Proposition \ref{prop: r-divisibility-cond} are in fact the only possibilities. Note that this covers the interesting case when $b=1$ and $\F_q$ is of odd characteristic, since in that case $r$ would be equal to $2$. We state these results in Corollary \ref{cor: r-is-prime}.

\begin{cor}
\label{cor: r-is-prime}
If $r = |-b|$ is a prime then the lengths of the non-trivial orbits are of two possible types 
\begin{enumerate}[(a)]
\item all orbits have the same length $m$ where $r \mid m$, or 
\item there are orbits of length $m$ and length $rm$, where $r \nmid m$. 
\end{enumerate}
In particular, if $b=1$ and $\F_q$ is of odd characteristic then 
\begin{enumerate}[resume*]
\item all orbits have the same length $m$ where $m$ is even, or 
\item there are orbits of lengths $m$ and $2m$, where $m$ is odd.  
\end{enumerate}
\end{cor}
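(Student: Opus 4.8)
The plan is to obtain this corollary as a direct specialization of Proposition \ref{prop: r-divisibility-cond}, with the primality of $r$ doing exactly one job: guaranteeing that the two cases listed in that proposition are exhaustive. We remain throughout in the setting of Section \ref{sec: distinct-roots-case}, so the standing hypothesis is that $p(x)$ splits over $\F_q$ with distinct roots $\gamma_1,\gamma_2$, and we keep the notation $r = |-b|$, $m = |\gamma_1|$, $n = |\gamma_2|$ with $m \leq n$. The key elementary observation is that a prime $r$ satisfies exactly one of $r \mid m$ or $\gcd(r,m) = 1$, since the only positive divisors of $r$ are $1$ and $r$ itself.

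First I would dispose of the general statement. Assuming $r$ is prime, I split into the two mutually exclusive cases above. If $r \mid m$, then part (a) of Proposition \ref{prop: r-divisibility-cond} gives that every non-trivial orbit has the common length $m$, which is precisely case (a). If instead $\gcd(r,m) = 1$, then equivalently $r \nmid m$ (again using primality of $r$), and part (b) of the same proposition produces orbits of the two lengths $m$ and $rm$, which is case (b). This exhausts all possibilities. For the special case $b = 1$ in odd characteristic, the point is to compute $r$ explicitly: here $-b = -1$, and since the characteristic is odd we have $-1 \neq 1$ while $(-1)^2 = 1$, so $|-1| = 2$, a prime. Applying the general result with $r = 2$, the condition $r \mid m$ reads ``$m$ even'' (giving case (c)), while $r \nmid m$ reads ``$m$ odd,'' in which case there are orbits of lengths $m$ and $2m$ (giving case (d)).

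I do not expect a genuine obstacle, as the corollary is essentially a repackaging of Proposition \ref{prop: r-divisibility-cond}; the only step meriting explicit comment is why primality is needed at all. For a composite $r$ one could have $\gcd(r,m) > 1$ together with $r \nmid m$, a configuration covered by neither part of the proposition (and indeed handled separately later via Lemma \ref{lem: main-residue-case}(c) and the prime-power analysis), so the exhaustiveness of cases (a) and (b) truly relies on $r$ being prime. The verification that $|-1| = 2$ in odd characteristic is the only arithmetic input specific to the $b=1$ specialization.
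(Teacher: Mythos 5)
Your proposal is correct and follows exactly the paper's route: the paper's proof likewise observes that primality of $r$ forces either $r \mid m$ or $\gcd(r,m)=1$, applies Proposition \ref{prop: r-divisibility-cond}, and obtains parts (c) and (d) by taking $r=2$. Your additional remarks (why primality is needed for exhaustiveness, and the verification that $|-1|=2$ in odd characteristic) are accurate elaborations of the same argument.
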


\begin{proof}
Since $r$ is prime, either $r \mid m$ or $\gcd(r,m) =1$. Hence, applying Proposition \ref{prop: r-divisibility-cond} we get the desired results. Parts (c) and (d) follow by taking $r=2$.     
\end{proof}

\begin{Remark}
Note that Corollary \ref{cor: r-is-prime} does not cover the case when $Q$ has distinct roots with $b=1$ over $\F_q$ in characteristic $2$. In that case, we have $-b = 1$ so that $|-b|$ divides $m$. Hence, by Lemma \ref{lem: distinct-roots-possible-lengths} and part (a) of Lemma \ref{lem: main-residue-case}  we would have $m =n$ and all non-trivial orbits are of equal length. A similar result also holds if $b=-1$ over $\F_q$ in odd characteristic.  
\end{Remark}

It is known in the literature that in the case of $p(x) = x^2 -x -1$ having distinct roots over $\F_p$ or a suitable extension field, the special case of parts (c) and (d) of Corollary \ref{cor: r-is-prime} hold (for example, see Theorem 2.5 of \cite{MR1188731}). 

%%%%%%%%%%%%%%%%%%%%%%%%%%%%%%%%%%%%%%%%%%%%%%%%%%

We now look at a more general case when the order of $-b$ is a prime power, and show that the non-trivial orbits of $G$ are at most of two different lengths as given in Theorem \ref{thm: main-residue-case}. The proof of Theorem \ref{thm: main-residue-case} requires an extension of Lemma \ref{lem: main-residue-case}, which relates the orders of elements $\gamma_1, \gamma_2$ and $\gamma_3$ of an abelian group $F$ where $\gamma_1\gamma_2 = \gamma_3$ and $\gamma_3$ has a prime power order. This is done in Theorem \ref{thm: prime-power-order}.
Before proving Theorem \ref{thm: prime-power-order}, we state and prove the following two required lemmas. Recall that the \textit{$p$-adic valuation $v_p(m)$} of an integer $m$ is the largest integer $k$ such that $p^k$ divides $m$. 
\begin{lem}
\label{lem: p-adic-valuation-power} 
Let $H$ be an abelian group and $a$ be in $H$ with $m= |a|$. Then for any prime $p$ and any positive integer $s$, we have $p \mid |a^s|$ if and only if $v_p(m) > v_p(s)$.    
\end{lem} 

\begin{proof}
Since $\displaystyle |a^s| = \frac{m}{\gcd(m,s)}$, we have 
\[
v_p(m) = v_p(|a^s|) + v_p(\gcd(m,s)). 
\]
To show the forward direction, if $p \mid |a^s|$ then $v_p(|a^s|) \geq 1$ so that 
\begin{eqnarray*}
 v_p(m) &\geq&  1 + v_p(\gcd(m,s)), \cr
        & = & 1 + \min(v_p(m), v_p(s)). 
\end{eqnarray*}
If $\min(v_p(m), v_p(s)) = v_p(m)$ we get $v_p(m) \geq  1 + v_p(m)$, a contradiction. Therefore, $\min(v_p(m), v_p(s)) = v_p(s)$ and hence $v_p(m) \geq  1 + v_p(s)$. 

To show the reverse direction, assume that $v_p(m) > v_p(s)$. Then $\min(v_p(m), v_p(s)) = v_p(s)$, and hence  
\begin{eqnarray*}
 v_p(m) &=&  v_p(|a^s|) + \min(v_p(m), v_p(s)), \cr
        &=&  v_p(|a^s|) + v_p(s),
\end{eqnarray*}
and so 
\begin{eqnarray*}
 v_p(|a^s|) &=&  v_p(m) - v_p(s), \cr
            &\geq& v_p(s) + 1 - v_p(s), \cr
            &=& 1. 
\end{eqnarray*}
In this case, $p \mid |a^s|$ as claimed. 
\end{proof}

\begin{lem}
\label{lem: main-residue-ps-case} 
Let $F$ be an abelian group, and let $\gamma_1 \gamma_2 = \gamma_3$ in $F$ with $m = |\gamma_1|, n=|\gamma_2|$. Suppose that $r=|\gamma_3|$ is of the form $ps$ where $p$ is a prime and $s$ is a positive integer. Without loss of generality, assume that $v_p(m) \leq v_p(n)$. Then
\begin{equation*}
n = \left\{ \begin{array}{ccc}
             \frac{\gcd(n,s)}{\gcd(m,s)} m  & \, & \text{if } v_p(m) > v_p(s), \\ 
                                            &    & \\
             p\frac{\gcd(n,s)}{\gcd(m,s)} m  & \, & \text{if } v_p(m) \leq v_p(s).\\ 
             \end{array} 
     \right. 
\end{equation*}
\end{lem}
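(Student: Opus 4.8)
The plan is to reduce the prime-power case to the prime case already established in Lemma \ref{lem: main-residue-case} by raising the defining relation to a suitable power. Since $r = |\gamma_3| = ps$, the element $\gamma_3^s$ has order $|\gamma_3^s| = r/\gcd(s,r) = ps/s = p$ (as $\gcd(s,ps) = s$), which is prime. Because $F$ is abelian, raising $\gamma_1\gamma_2 = \gamma_3$ to the $s$-th power gives $\gamma_1^s \gamma_2^s = \gamma_3^s$, a relation of exactly the type treated in Lemma \ref{lem: main-residue-case}, but now with a factor of \emph{prime} order. Writing $m' = |\gamma_1^s| = m/\gcd(m,s)$ and $n' = |\gamma_2^s| = n/\gcd(n,s)$ via the order formula $|\gamma^k| = |\gamma|/\gcd(k,|\gamma|)$, the prime case of Lemma \ref{lem: main-residue-case} tells us that either $m' = n'$, or the larger of $m', n'$ equals $p$ times the smaller.

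The next step is to decide which alternative occurs, and this is where the hypothesis $v_p(m) \le v_p(n)$ and Lemma \ref{lem: p-adic-valuation-power} enter. By Lemma \ref{lem: p-adic-valuation-power}, $p \mid m'$ if and only if $v_p(m) > v_p(s)$, and $p \mid n'$ if and only if $v_p(n) > v_p(s)$. If $v_p(m) > v_p(s)$, then $v_p(n) \ge v_p(m) > v_p(s)$ as well, so $p$ divides both $m'$ and $n'$; in particular $p$ divides the smaller of the two, and part (a) of Lemma \ref{lem: main-residue-case} forces $m' = n'$. If instead $v_p(m) \le v_p(s)$, then $p \nmid m'$, i.e. $\gcd(p,m') = 1$; since $\gamma_2^s = \gamma_3^s(\gamma_1^s)^{-1}$ is a product of commuting elements of coprime orders $p$ and $m'$, the multiplicativity of orders in the coprime case (as in the proof of part (b) of Lemma \ref{lem: main-residue-case}) gives $n' = p\,m'$. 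Thus the two alternatives are pinned down unambiguously by comparing $v_p(m)$ with $v_p(s)$.

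It then remains to substitute the values of $m'$ and $n'$ and solve for $n$. In the first case, $m/\gcd(m,s) = n/\gcd(n,s)$ rearranges to $n = \frac{\gcd(n,s)}{\gcd(m,s)}\,m$, while in the second case $n/\gcd(n,s) = p\, m/\gcd(m,s)$ rearranges to $n = p\,\frac{\gcd(n,s)}{\gcd(m,s)}\,m$, which is exactly the stated dichotomy.

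I anticipate that the only delicate point is the case analysis in the middle paragraph: one must be certain that the valuation comparison selects a single branch of Lemma \ref{lem: main-residue-case} and excludes the other. This is precisely the purpose of the hypothesis $v_p(m) \le v_p(n)$ — it guarantees that whenever $p \mid m'$ we automatically have $p \mid n'$, so the ``equal orders'' branch cannot be confused with the ``$p$-times'' branch, and conversely that $p \nmid m'$ places us cleanly in the coprime branch where the product formula applies directly without any assumption on the relative sizes of $m'$ and $n'$.
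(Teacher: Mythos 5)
Your proof is correct and follows the same route as the paper's: raise the relation to the $s$-th power so that $\gamma_3^s$ has prime order $p$, apply Lemma \ref{lem: main-residue-case} to $\gamma_1^s\gamma_2^s=\gamma_3^s$, and translate the resulting divisibility conditions into valuation comparisons via Lemma \ref{lem: p-adic-valuation-power}. The one genuine improvement is organizational: by splitting on $v_p(m)$ versus $v_p(s)$ rather than on which of $|\gamma_1^s|,|\gamma_2^s|$ is larger --- noting that in the first branch $p$ divides both and hence the smaller, and in the second branch invoking the coprime-order product formula directly, which needs no ordering assumption --- you render both branches independent of the relative sizes of $|\gamma_1^s|$ and $|\gamma_2^s|$, thereby avoiding the paper's Case~2, where a separate contradiction argument is required to rule out $|\gamma_1^s|>|\gamma_2^s|$.
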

\begin{proof}
Since $\gamma_1 \gamma_2 = \gamma_3$ we have $\gamma_1^s \gamma_2^s = \gamma_3^s$, where $|\gamma_3^s| = p$ a prime. We look at two cases. 

\begin{case} $|\gamma_1^s| \leq |\gamma_2^s|$ \\
Applying Lemma \ref{lem: main-residue-case}, we have that either $|\gamma_1^s| = |\gamma_2^s|$ if $p \mid |\gamma_1^s|$ or $|\gamma_2^s| = p|\gamma_1^s|$ if $p \nmid |\gamma_1^s|$. We have $\displaystyle |\gamma_1^s| = \frac{m}{\gcd(m,s)}$, and $\displaystyle |\gamma_2^s|= \frac{n}{\gcd(n,s)}$. Therefore, if $p \mid |\gamma_1^s|$ then
\[
n = \frac{\gcd(n,s)}{\gcd(m,s)} m,  
\]
and if $p \nmid |\gamma_1^s|$  then 
\[
n = p\frac{\gcd(n,s)}{\gcd(m,s)} m. 
\]
Note that using Lemma \ref{lem: p-adic-valuation-power}, the condition $p \mid |\gamma_1^s|$ can be replaced by $v_p(m) > v_p(s)$ and the condition $p \nmid |\gamma_1^s|$ can be replaced by $v_p(m) \leq v_p(s)$.  
\end{case} 

\begin{case} $|\gamma_1^s| > |\gamma_2^s|$ \\
We show that this case is not possible. Applying Lemma \ref{lem: main-residue-case}, if $p \mid |\gamma_2^s|$ we get $|\gamma_2^s| = |\gamma_1^s|$ which is not possible by our assumption for this case. Therefore, we will assume $p \nmid |\gamma_2^s|$. Then, using Lemma \ref{lem: main-residue-case} we have $|\gamma_1^s| = p |\gamma_2^s|$ or  
\[
m = p\gcd(m,s) |\gamma_2^s|. 
\]
Hence, $v_p(m) = 1 + v_p(\gcd(m,s))$ with the $p$-adic valuation of $|\gamma_2^s| = 0$ since $p \nmid |\gamma_2^s|$. We claim that $v_p(\gcd(m,s)) = \min(v_p(m),v_p(s)) = v_p(m)$. To see this, note that  $n = |\gamma_2| = |\gamma_2^s|\gcd(n,s) = \gcd(n,s)$ since $p \nmid |\gamma_2^s|$. So, $v_p(n) = v_p(\gcd(n,s)) = \min(v_p(n), v_p(s))$ i.e. $v_p(n) \leq v_p(s)$. Then since $v_p(m) \leq v_p(n)$ by assumption of the Lemma, we have $\min(v_p(m), v_p(s)) = v_p(m)$. This proves our claim that $v_p(\gcd(m,s)) = \min(v_p(m),v_p(s)) = v_p(m)$. This then gives us $v_p(m) = 1 + v_p(\gcd(m,s)) = 1 + v_p(m)$, a contradiction.     
\end{case}
\end{proof}

\begin{thm}
\label{thm: prime-power-order}
Let $F$ be an abelian group, and $\gamma_1 \gamma_2 = \gamma_3$ with $r=|\gamma_3|$. Assume that $r=p^\alpha$ where $p$ is a prime number and $\alpha$ is a positive integer. Let $m= |\gamma_1|$, $n= |\gamma_2|$, $k=\nu_p(m)$ and suppose without loss of generality that $v_p(m) \leq v_p(n)$. Then the following hold
\begin{equation*}
n = \left\{ \begin{array}{ccc}
     m  & \, & \text{if } v_p(m) \geq \alpha, \\ 
                                            &    & \\
             p^{\alpha-k}m  & \, & \text{if } v_p(m) < \alpha.\\ 
             \end{array} 
     \right. 
\end{equation*}
\end{thm}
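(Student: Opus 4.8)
The plan is to derive the theorem directly from Lemma \ref{lem: main-residue-ps-case} by writing the prime power $r = p^\alpha$ in the form $ps$ with the choice $s = p^{\alpha-1}$. With this choice $s$ is itself a power of $p$, so the greatest common divisors appearing in Lemma \ref{lem: main-residue-ps-case} collapse to pure $p$-powers: $\gcd(m,s) = p^{\min(v_p(m),\,\alpha-1)}$ and $\gcd(n,s) = p^{\min(v_p(n),\,\alpha-1)}$, while $v_p(s) = \alpha - 1$. This reduces the whole problem to bookkeeping with $p$-adic valuations, using the standing hypothesis $v_p(m) \le v_p(n)$ together with $k = v_p(m)$.

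First I would treat the case $v_p(m) \ge \alpha$. Here $v_p(m) \ge \alpha > \alpha - 1 = v_p(s)$, so the first branch of Lemma \ref{lem: main-residue-ps-case} applies. Since $v_p(n) \ge v_p(m) \ge \alpha$ as well, both $\min(v_p(m),\alpha-1)$ and $\min(v_p(n),\alpha-1)$ equal $\alpha-1$; hence $\gcd(m,s) = \gcd(n,s) = p^{\alpha-1}$ and the formula collapses to $n = m$, which is the first alternative in the statement.

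For the case $v_p(m) < \alpha$, i.e.\ $k \le \alpha - 1$, we have $v_p(m) \le \alpha - 1 = v_p(s)$, so the second branch applies and $\gcd(m,s) = p^{k}$. The subtle point --- and the step I expect to be the main obstacle --- is that $n$ still appears implicitly on the right-hand side through $\gcd(n,s)$, so the relation $n = p\,\dfrac{\gcd(n,s)}{\gcd(m,s)}\,m$ must be solved for $n$ rather than merely evaluated. I would resolve this by taking $p$-adic valuations of both sides: writing $j = v_p(n)$, the relation forces $j = 1 + \min(j,\alpha-1)$, which is impossible when $j \le \alpha - 1$ and therefore compels $j \ge \alpha$. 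Consequently $\gcd(n,s) = p^{\alpha-1}$, and substituting back gives $n = p\cdot\dfrac{p^{\alpha-1}}{p^{k}}\,m = p^{\alpha-k}m$, the second alternative. As a consistency check, specializing to $\alpha = 1$ (so $s = 1$ and both gcds are trivial) recovers exactly the prime case $m=n$ or $n=pm$ of Lemma \ref{lem: main-residue-case}.
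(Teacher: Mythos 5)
Your proof is correct and takes essentially the same route as the paper's: both invoke Lemma \ref{lem: main-residue-ps-case} with $s = p^{\alpha-1}$, collapse the gcds to $p$-powers, and resolve the implicit occurrence of $n$ in the second branch by taking $p$-adic valuations to force $v_p(n) = \alpha$. No gaps.
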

\begin{proof}
Using Lemma \ref{lem: main-residue-ps-case} with $r=p^\alpha$ and $s=p^{\alpha-1}$, we have  
\begin{equation}
\label{eqn: prime-power-order}
n = \left\{ \begin{array}{ccc}
             \dfrac{\gcd(n,p^{\alpha-1})}{\gcd(m,p^{\alpha-1})}m  & \, & \text{if } v_p(m) \geq \alpha \\ 
                                            &    & \\
             p\dfrac{\gcd(n,p^{\alpha-1})}{\gcd(m,p^{\alpha-1})}m  & \, & \text{if } v_p(m) < \alpha.\\ 
             \end{array} 
     \right. 
\end{equation}

Based on Equation \ref{eqn: prime-power-order} we split our analysis into two cases, when $ v_p(m) \geq \alpha$ and $ v_p(m) < \alpha$.

\begin{case} $ v_p(m) \geq \alpha$. \\
Using Equation \ref{eqn: prime-power-order}, we have 
\begin{equation} 
\label{eqn2: prime-power-order}
\begin{array}{cc} 
n &= \dfrac{\gcd(n,p^{\alpha-1})}{\gcd(m,p^{\alpha-1})}m, \\ 
  & \\ 
  &= \dfrac{\gcd(n,p^{\alpha-1})}{p^{\alpha-1}}m,
\end{array}
\end{equation} 
where the second line follows from the fact that $p^\alpha$ divides $m$. Since $v_p(n) \geq v_p(m)$ by assumption, we have $v_p(n) \geq \alpha$. Hence, $\gcd(n,p^{\alpha-1}) = p^{\alpha-1}$ and 
\begin{align*}
n & = \dfrac{\gcd(n,p^{\alpha-1})}{p^{\alpha-1}}m\\
  & = m. 
\end{align*}
\end{case}

\begin{case} $v_p(m) < \alpha$. 
Using Equation \ref{eqn: prime-power-order} we have 
\[ 
n = p \dfrac{(n,p^{\alpha-1})}{(m,p^{\alpha-1})}m, 
\]   
from which we get 
\[
(m,p^{\alpha-1}) n = p (n,p^{\alpha-1}) m. 
\]
Taking the $p$-adic valuation of both sides of the above equation we get 

\[
\min(v_p(m), \alpha-1) + v_p(n) = 1 + \min(v_p(n),\alpha-1) + v_p(m). 
\]
And since $\min(v_p(m),\alpha-1) = v_p(m)$, this reduces to 
\[
v_p(n) = 1 + \min(v_p(n),\alpha-1). 
\]
If $\min(v_p(n),\alpha-1) = v_p(n)$ then $v_p(n) = 1 + v_p(n)$, a contradiction. Therefore, $\min(v_p(n),\alpha-1) = \alpha-1$ from which we conclude that $v_p(n) = \alpha$. Hence, from Equation \ref{eqn: prime-power-order} with $v_p(m) = k$ we get 

\begin{align*}
n &= p \dfrac{(n,p^{\alpha-1})}{(m,p^{\alpha-1})}m\\
  &= p\dfrac{p^{\alpha-1}}{p^k} m \\
  &= p^{\alpha -k} m.
\end{align*}
\end{case}
\end{proof}

Using Lemma \ref{lem: distinct-roots-possible-lengths} and Theorem \ref{thm: prime-power-order}, when the order $r$ of $-b$ is a prime power i.e. $ r = p^\alpha$ for some positive integer $\alpha$ we can guarantee the existence of at most two types of non-trivial orbit lengths. We state this in the next theorem. 

\begin{thm}
\label{thm: main-residue-case}
Let $\F_q$ be a finite field, and let $\displaystyle Q = \begin{pmatrix} a & b \\ 1 & 0 \end{pmatrix}$ be such that its characteristic polynomial $p(x) = x^2 -ax -b$ splits into distinct roots over $\F_q$. Further, suppose the order of $|-b|$ is of the form $p^\alpha$ for some prime $p$ and positive integer $\alpha$. Then the lengths of the non-trivial orbits of $G= \langle Q \rangle$ under its canonical action on $\F_q \times \F_q$ are of two possible types 
\begin{enumerate}[(a)]
\item All orbits have the same length $l$, where $v_p(l) \geq \alpha$, or 
\item There are orbits of length $l$ and length $p^{\alpha-k}l$, where $k = v_p(l) < \alpha $. 
\end{enumerate}
\end{thm}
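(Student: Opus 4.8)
The plan is to reduce the theorem to the two results already in hand: Lemma \ref{lem: distinct-roots-possible-lengths}, which pins the possible orbit lengths down to $|\gamma_1|$, $|\gamma_2|$, or $\lcm(|\gamma_1|,|\gamma_2|)$, and Theorem \ref{thm: prime-power-order}, which controls the two orders $m=|\gamma_1|$ and $n=|\gamma_2|$ when their product has prime power order. The bridge between them is Equation \ref{eqn: coeff-formulas-2}: since $\gamma_1\gamma_2 = -b$, I would take $\gamma_3 = -b$ in the group $F = \F_q^\times$, so that $r = |-b| = p^\alpha$ is exactly the prime power hypothesis required by Theorem \ref{thm: prime-power-order}. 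Relabelling the roots if necessary so that $v_p(m) \le v_p(n)$ and setting $k = v_p(m)$, Theorem \ref{thm: prime-power-order} immediately delivers the dichotomy $n = m$ when $v_p(m)\ge \alpha$, and $n = p^{\alpha-k}m$ when $v_p(m) < \alpha$.

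First I would dispatch the case $v_p(m)\ge\alpha$. Here $m=n$, so the three candidate lengths of Lemma \ref{lem: distinct-roots-possible-lengths} collapse: $|\gamma_1| = |\gamma_2| = m$ and $\lcm(|\gamma_1|,|\gamma_2|) = m$. Consequently every non-trivial orbit has the same length $l := m$, and since $v_p(l) = v_p(m) \ge \alpha$, this is precisely conclusion (a). Next I would treat $v_p(m) < \alpha$, where $n = p^{\alpha-k}m$. Because $n$ is an integer multiple of $m$, the third candidate length satisfies $\lcm(m,n) = n = p^{\alpha-k}m$, so the set of possible orbit lengths from Lemma \ref{lem: distinct-roots-possible-lengths} reduces to just the two values $l := m$ and $p^{\alpha-k}l$, with $k = v_p(l) < \alpha$ by assumption. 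This is conclusion (b).

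To confirm that in case (b) both values are genuinely attained, rather than it being a degenerate instance of (a), I would note that distinctness of $\gamma_1,\gamma_2$ makes each eigenspace one-dimensional and non-zero. The non-zero initial vectors $v$ with $P^{-1}v = (c,0)^{\mathsf{T}}$, $c\ne0$, produce orbits of length $m$, and those with $P^{-1}v = (0,c)^{\mathsf{T}}$ produce orbits of length $n$, so both length types actually occur, exactly as classified in Lemma \ref{lem: distinct-roots-possible-lengths}.

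I expect the only real subtlety to be bookkeeping rather than difficulty, since Theorem \ref{thm: prime-power-order} already carries all of the arithmetic weight. The two points that merit care are: (i) verifying $\lcm(m,n)=n$ in the second case so that no spurious third length appears, which hinges on $m \mid n$; and (ii) ensuring the two lengths in (b) are realized by honest orbits, which is the one place the distinct-roots hypothesis is invoked. Neither of these is a genuine obstacle, so the proof should amount to invoking the two earlier results and reading off the orbit lengths.
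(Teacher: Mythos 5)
Your proposal is correct and follows essentially the same route as the paper: identify $\gamma_3 = -b$ so that Theorem \ref{thm: prime-power-order} gives the dichotomy $n=m$ or $n=p^{\alpha-k}m$, observe that $\lcm(m,n)$ collapses to $m$ or $p^{\alpha-k}m$ accordingly, and read off the orbit lengths from Lemma \ref{lem: distinct-roots-possible-lengths}. Your added remark that both lengths in case (b) are genuinely attained (via the eigenvector initial conditions) is a small point the paper leaves implicit, but it does not change the argument.
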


\begin{proof}
Let $\gamma_1$ and $\gamma_2$ be the distinct roots of $p(x)$ in $\F_q$ with $\gamma_1 \gamma_2 = -b$, where $|-b| = p^\alpha$. Applying Theorem \ref{thm: prime-power-order}, 
we have two possibilities for the orders of $\gamma_1$ and $\gamma_2$. Either $|\gamma_1| = |\gamma_2|$ or $|\gamma_2| = p^{\alpha-k} |\gamma_1|$ where wlog we assume $v_p(|\gamma_1|) = k < \alpha$. 

Hence, $\lcm(|\gamma_1|,|\gamma_2|)$ is either equal to $|\gamma_1|$ or $p^{\alpha-k}|\gamma_1|$. Now, applying Lemma \ref{lem: distinct-roots-possible-lengths} we get the desired results. 
\end{proof}

%%%%%%%%%%%%%%%%%%%%%%%%%%%%%%%%%%%%%%%%%%%%%%%%%%
The following theorem gives us the number of orbits of each type. 

\begin{thm}[Number of orbits of each type]
\label{thm: distinct-roots-orbit-numbers}
Let $Q$ be such that its characteristic polynomial has distinct roots over $\F_q$, and suppose there are only non-trivial orbits of lengths $l$ and $kl$ (with $k$ possibly equal to one). Then there are $\displaystyle \frac{q-1}{l}$ orbits of length $l$ and $\displaystyle \frac{q(q-1)}{kl}$ orbits of length $kl$. The total number of non-trivial orbits is equal to $\displaystyle \frac{(q-1)(q+k)}{kl}$. 

% If all the non-trivial orbits of $Q$ are of equal length $l$, then there are exactly $\displaystyle \frac{q^2-1}{l}$ number of non-trivial  orbits.
\end{thm}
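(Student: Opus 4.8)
The plan is to pass to the diagonalized form $Q = PDP^{-1}$ with $D = \begin{pmatrix} \gamma_1 & 0 \\ 0 & \gamma_2 \end{pmatrix}$ and to exploit the fact that the invertible change of coordinates $v \mapsto P^{-1}v$ is a bijection of $\F_q \times \F_q$ onto itself that intertwines the action of $Q$ with the diagonal action of $D$. Orbit lengths and orbit counts are therefore preserved, so it suffices to count orbits of the diagonal action on the coordinates $w = P^{-1}v = \begin{pmatrix} c_1 \\ c_2 \end{pmatrix}$, where $D^n w = \begin{pmatrix} \gamma_1^n c_1 \\ \gamma_2^n c_2 \end{pmatrix}$.

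First I would partition the $q^2 - 1$ non-zero vectors $w$ into the three classes dictated by Lemma \ref{lem: distinct-roots-possible-lengths}: the eigenline $c_1 \neq 0,\ c_2 = 0$ ($q-1$ vectors, orbit length $m = |\gamma_1|$), the eigenline $c_1 = 0,\ c_2 \neq 0$ ($q-1$ vectors, orbit length $n = |\gamma_2|$), and the generic vectors $c_1, c_2 \neq 0$ ($(q-1)^2$ vectors, orbit length $\lcm(m,n)$). Since $b$ is a unit, $\gamma_1$ and $\gamma_2$ are non-zero, so the diagonal action preserves the vanishing or non-vanishing of each coordinate; hence each class is invariant under $G$ and no orbit crosses between classes. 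Moreover every orbit inside a fixed class has exactly the stated length, because $\gamma_i^n c_i = c_i$ with $c_i \neq 0$ forces $|\gamma_i| \mid n$. A quick tally confirms $(q-1)+(q-1)+(q-1)^2 = q^2-1$, so the three classes exhaust the non-zero vectors.

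Next I would match these classes to the hypothesized lengths $l$ and $kl$. Assuming without loss of generality that $m \le n$, the three occurring lengths are $m \le n \le \lcm(m,n)$; the hypothesis that only two distinct lengths occur forces $\lcm(m,n) = n$, equivalently $m \mid n$. I then set $l = m$ and $kl = n = \lcm(m,n)$ with $k = n/m$ (so $k=1$ exactly when $m = n$). The first eigenline contributes $(q-1)/l$ orbits of length $l$, while the remaining $q(q-1)$ vectors — the second eigenline together with the generic vectors — all have orbit length $kl$, giving $\bigl((q-1) + (q-1)^2\bigr)/(kl) = q(q-1)/(kl)$ orbits. Corollary \ref{cor: length-divides-q-1} supplies $l \mid q-1$ and $kl \mid q-1$, ensuring these quotients are integers. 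Adding the two counts and clearing denominators gives the total $(q-1)/l + q(q-1)/(kl) = (q-1)(q+k)/(kl)$.

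The point requiring care, rather than a genuine obstacle, is the bookkeeping when $k=1$: then $m = n$ and all three classes share the single length $l$, so the division into ``$(q-1)/l$ orbits of length $l$'' and ``$q(q-1)/(kl)$ orbits of length $kl$'' is merely a grouping of equal-length orbits. I would remark explicitly that the two formulas still partition all $q^2-1$ vectors, recovering the correct total $(q^2-1)/l$, which is precisely why the statement and its count remain valid with $k=1$.
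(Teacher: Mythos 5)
Your proof is correct and follows essentially the same route as the paper: pass to the diagonalized coordinates, observe that the $q-1$ vectors on the first eigenline account for all orbits of length $l$ (giving $(q-1)/l$ of them), and then attribute the remaining $q(q-1)$ non-zero vectors to orbits of length $kl$. The only cosmetic difference is that you count the second class directly as the union of the other eigenline and the generic vectors, whereas the paper obtains the same number by subtraction from $q^2-1$; your extra care about orbit-invariance of the classes and the $k=1$ degeneracy is a welcome but inessential refinement.
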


\begin{proof}
Suppose that $\gamma_1, \gamma_2$  are the distinct roots of $Q$. Since there are only two possible orbit lengths $l$ and $kl$, we must have $|\gamma_1| = l$, and $|\gamma_2| = k|\gamma_1| = kl$. 

Let $n_1$ be the number of orbits of length $l$, and $n_2$ be the number of orbits of length $kl$. Note that an orbit of length $l$ only occurs only when the initial vector in the transformed basis as given by Equation \ref{eqn: transformed-basis-eqn} is of the form $\displaystyle \begin{pmatrix} x \\ 0 \end{pmatrix}$. Moreover, the orbit of such a vector in the transformed basis is of the form
\[
\begin{array}{cccc}
\begin{pmatrix} x \\ 0 \end{pmatrix}, & \begin{pmatrix} \gamma_1 x \\ 0 \end{pmatrix}, & \cdots , & \begin{pmatrix} \gamma_1^{l-1}x \\ 0 \end{pmatrix}.
\end{array}
\] 
Since there are $q-1$ such vectors in $\F_q \times \F_q$, we conclude that the number of such orbits is equal to
\[
n_1 = \frac{q-1}{l}.
\]
Then, since the orbits partition the set $\F_q \times \F_q$ we get
\[
n_1l + n_2kl = q^2 -1,
\]
where we subtract one to account for the trivial orbit. So,  
\begin{eqnarray*}
n_2k &=&  \frac{q^2-1}{l} - n_1, \cr
     &=&  \frac{q(q-1)}{l}
\end{eqnarray*}
and 
\[
n_2 = \frac{q(q-1)}{kl}. 
\]
The total number of non-trivial orbits is 
\begin{eqnarray*}
n_1 + n_2 &=& \frac{(q-1)(q+k)}{kl}.
\end{eqnarray*}
\end{proof} 

\begin{cor}
Let $Q$ be such that its characteristic polynomial has distinct roots over $\F_q$. Then, if all the non-trivial orbits of $Q$ are of equal length $l$, or of two different lengths $l$ and $kl$, the total number of non-trivial orbits is greater than or equal to $q+1$. 
\end{cor}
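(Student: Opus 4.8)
The plan is to read off the total count directly from Theorem \ref{thm: distinct-roots-orbit-numbers} and then exploit the divisibility constraint coming from the fact that $l$ and $kl$ are orders of elements of $\F_q^\times$. By hypothesis the non-trivial orbits have at most two lengths $l$ and $kl$, with $k \geq 1$ a positive integer (the case $k=1$ covering the equal-length situation). Theorem \ref{thm: distinct-roots-orbit-numbers} then tells us that the total number of non-trivial orbits is exactly
\[
N = \frac{(q-1)(q+k)}{kl}.
\]
So the entire corollary reduces to bounding this single expression below by $q+1$.

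The key observation I would make is that $kl$ divides $q-1$. Indeed, in the setup of Theorem \ref{thm: distinct-roots-orbit-numbers} the two roots $\gamma_1,\gamma_2$ lie in $\F_q$ with $|\gamma_1| = l$ and $|\gamma_2| = kl$, so $kl$ is the order of the element $\gamma_2 \in \F_q^\times$. By Lagrange's theorem (equivalently, by Corollary \ref{cor: length-divides-q-1}, since the roots are distinct over $\F_q$), we therefore have $kl \mid q-1$, and in particular $kl \leq q-1$. This is the one structural fact the argument needs, and it is essentially immediate from the hypothesis that the roots split over $\F_q$.

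With $kl \mid q-1$ in hand the estimate is a short computation: since $kl \leq q-1$ we get $\tfrac{q-1}{kl} \geq 1$, and hence
\[
N = \frac{q-1}{kl}\,(q+k) \;\geq\; q+k \;\geq\; q+1,
\]
the last inequality using $k \geq 1$. I would present this as the final line of the proof.

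I do not expect any genuine obstacle here; the only thing to be careful about is which of the two quantities $l$, $kl$ is the relevant order. Writing $kl = |\gamma_2|$ makes the divisibility $kl \mid q-1$ transparent and avoids the weaker (and here insufficient) bound one would get from only using $l \mid q-1$. It is also worth noting that equality $N = q+1$ forces $kl = q-1$ and $k=1$, i.e.\ $l = q-1$ with both roots primitive; mentioning this pins down exactly when the bound is attained, though it is not required for the statement.
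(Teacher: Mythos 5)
Your proof is correct and follows essentially the same route as the paper: both arguments bound the total count $\frac{(q-1)(q+k)}{kl}$ from below by using that the largest orbit length $kl$ (or $l$ in the equal-length case) is at most $q-1$, yielding $N \geq q+k \geq q+1$. Your unification of the two cases via $k \geq 1$ and your explicit justification that $kl \mid q-1$ (as the order of a root in $\F_q^\times$) are minor presentational improvements over the paper's case split, but the underlying argument is identical.
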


\begin{proof}
In the case of all orbits having equal length $l$, since the largest possible value of $l$ in Theorem \ref{thm: distinct-roots-orbit-numbers} is $l = q-1$ the number of orbits has to be greater than or equal to $q+1$. In the case of two types of orbit lengths, the largest possible orbit length is $kl = q-1$ and hence by Theorem \ref{thm: distinct-roots-orbit-numbers} the number of orbits is greater than or equal to $q+k$. In either case, the number of orbits is greater than or equal to $\min(q+1, q+k) = q+1$.
\end{proof}

%% Lucas primitive root

Recall that by a primitive root, we mean a root of the characteristic polynomial of $Q$ that is a generator to $\F_q^\times$. We can classify when a  primitive root occurs by looking at the number of non-trivial orbits, as given by the next Corollary. 
\begin{cor}
\label{cor: primitive-roots-prime-power}
Let $Q$ be such that its characteristic polynomial has distinct roots over $\F_q$, and $-b$  is of a prime power order. Then 
\begin{enumerate}[(a)]
\item $Q$ has two primitive roots (i.e. generators of $\F_q^\times$) if and only if there are exactly $q+1$ non-trivial orbits. 
\item $Q$ has exactly one primitive root (i.e. a generator of $\F_q^\times$) if and only if  there are exactly $q$ non-trivial orbits of length $q-1$.   
\end{enumerate}
\end{cor}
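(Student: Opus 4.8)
The plan is to translate the notion of a primitive root into a statement about multiplicative orders and then read off the orbit counts from Theorem \ref{thm: distinct-roots-orbit-numbers}. Since $p(x)$ has exactly the two roots $\gamma_1$ and $\gamma_2$ in $\F_q$, the number of primitive roots of $Q$ is precisely the number of these two elements having order $q-1$. Throughout I would write $l = |\gamma_1| \le |\gamma_2| = kl$ as in Theorem \ref{thm: distinct-roots-orbit-numbers} (with $k \ge 1$, and $k=1$ exactly when the two orders coincide by Theorem \ref{thm: main-residue-case}), recalling from Corollary \ref{cor: length-divides-q-1} that both $l$ and $kl$ divide $q-1$; in particular $kl \le q-1$. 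Theorem \ref{thm: distinct-roots-orbit-numbers} then supplies $n_1 = (q-1)/l$ orbits of length $l$, $n_2 = q(q-1)/(kl)$ orbits of length $kl$, and $(q-1)(q+k)/(kl)$ orbits in total.

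For part (a), the forward direction is immediate: if both roots are primitive then $l = kl = q-1$, so $k=1$ and the total orbit count is $(q-1)(q+1)/(q-1) = q+1$. For the converse I would set $(q-1)(q+k)/(kl) = q+1$, i.e. $(q-1)(q+k) = (q+1)kl$, and combine it with $kl \le q-1$. This gives $(q-1)(q+k) \le (q+1)(q-1)$, hence $q+k \le q+1$, forcing $k=1$; the identity then collapses to $l = q-1$. Thus $|\gamma_1| = |\gamma_2| = q-1$ and both roots are primitive.

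For part (b), if $Q$ has exactly one primitive root then the two roots have distinct orders, one equal to $q-1$ and the other strictly smaller (equal orders would make both roots primitive). We are therefore in the genuinely two-length situation, with the primitive root realizing the larger length $kl = q-1$ and $k > 1$, while the smaller length $l < q-1$. Since $\lcm(|\gamma_1|,|\gamma_2|) = kl$, Lemma \ref{lem: distinct-roots-possible-lengths} confirms there are no further lengths, so the only orbits of length $q-1$ are the $n_2 = q(q-1)/(q-1) = q$ orbits counted by Theorem \ref{thm: distinct-roots-orbit-numbers}. For the converse I would assume there are exactly $q$ orbits of length $q-1$ and eliminate the alternatives: the equal-length case ($k=1$) yields either $0$ such orbits (if $l \ne q-1$) or $q+1$ of them (if $l = q-1$), never $q$; and in the two-length case one cannot have $q-1 = l$, since then $kl = k(q-1)$ with $k>1$ would fail to divide $q-1$, contradicting Corollary \ref{cor: length-divides-q-1}. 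The remaining possibility is $q-1 = kl$ with $k>1$, giving exactly $q$ orbits of length $q-1$ and a single primitive root $\gamma_2$.

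The step I expect to require the most care is the converse of (a): one must convert the single numerical identity for the total number of orbits into the two order equalities $|\gamma_1| = |\gamma_2| = q-1$, and this hinges entirely on the divisibility constraint $kl \mid q-1$ to bound $k$. Once that bound is secured, both converses reduce to an elementary case analysis on whether $q-1$ is realized as the smaller length $l$ or the larger length $kl$.
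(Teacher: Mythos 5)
Your proposal is correct and follows essentially the same route as the paper: both directions are read off from the orbit-count formulas of Theorem \ref{thm: distinct-roots-orbit-numbers}, with the converse of (a) hinging on the bound $kl \le q-1$ to force $k=1$. Your case analysis in the converse of (b) is in fact slightly more explicit than the paper's (which simply asserts $kl=q-1$ rather than ruling out $l=q-1$ via divisibility), but the underlying argument is the same.
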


\begin{proof}
The claims follow from Theorem \ref{thm: main-residue-case} and the expressions for the number of orbits of different lengths as given by Theorem \ref{thm: distinct-roots-orbit-numbers}. 
\begin{enumerate}[(a)]
\item Suppose $Q$ has two primitive roots (i.e. generators of $\F_q^\times$). Then there is only one non-trivial orbit length of $l = q-1$. Hence the number of such orbits is $\frac{q^2-1}{q-1} = q+1$. On the other hand, if there are exactly $q+1$ non-trivial orbits then 
\[
q+1 = \frac{(q-1)(q+k)}{kl},  
\]
using the formula given by Theorem \ref{thm: distinct-roots-orbit-numbers}. From this we get, 
\begin{eqnarray}
\label{eqn: primitive-roots-prime-power}
    (q-1)(q+k) &=& (q+1)kl, \cr 
               &\leq&  (q+1)(q-1), 
\end{eqnarray}
where the last line on Equation \ref{eqn: primitive-roots-prime-power} follows form the fact that the largest orbit length of $kl$ has to be bounded by $q-1$. From this, we get $k = 1$. Hence, both distinct roots of $Q$ have the same order $l$ and $l = q-1$. 

\item Suppose $Q$ has exactly one primitive root (i.e. a generator of $\F_q^\times$). Then, the order of that root is $q-1$. Now, using the formula given by Theorem \ref{thm: distinct-roots-orbit-numbers} we get $n_2 = q$ orbits of length $q-1$. 

On the other hand, suppose there are exactly $q$ orbits of length $q-1$. Then by Theorem \ref{thm: main-residue-case}, the possible orbit lengths are $l$ and $kl$ where $kl= q-1$. If $k=1$, we would have a total of $q$ orbits of all equal length $q-1$, which is not possible since $q(q-1) = q^2 -q$ is less than cardinality of $\F_q \times \F_q$. Therefore, $k >1$. In that case, $l < kl = q-1$ and $Q$ has a root that is not primitive. We conclude that $Q$ has exactly one primitive root.   
\end{enumerate}
\end{proof}

\begin{Example}
\label{ex: permutation-odd-char}
Consider the case when $Q$ is the permutation matrix $\displaystyle \begin{pmatrix} 0 & 1 \\ 1 & 0 \end{pmatrix}$ over $\F_q$ of odd characteristic. We have $a=0$, $b=1$ and distinct roots $1$ and $-1$ . There are $q-1$ non-trivial orbits of length $1$ and $\displaystyle \frac{q(q-1)}{2}$ non-trivial orbits of length $2$.      
\end{Example} 

% \begin{Example}
% Consider the finite field $\F_{163}$. Let $a = 73$ and $b = 159$. We have $-b = 4$ and $|-b| = 81 = 3^4$. The characteristic polynomial $p(x)$ has two distinct roots $\gamma_1 = 8$ and $\gamma_2 = 82$. 
% Using a computer simulation, we find that we have orbits of length $l = 27 = 3^3$, and orbits of length $p^{\alpha - v_p(l)} \cdot l = 3^{4-3} \cdot 27 =81$. Using Theorem \ref{thm: distinct-roots-orbit-numbers} there are $\displaystyle \frac{q-1}{l} = \frac{162}{27} = 6$ orbits of length $27$, and $\displaystyle \frac{q(q-1)}{kl} = \frac{163 \cdot 162}{3 \cdot 27} = 326$ orbits of length $81$.
% \end{Example}

\begin{Example}  
Consider the finite field $\F_{163}$. Let $a = 9$ and $b = 159$. We have $-b = 4$ and $|-b| = 81 = 3^4$. The characteristic polynomial $p(x) = x^2 - 9x - 159$ has two distinct roots $\gamma_1 = 23$ and $\gamma_2 = 149$. Using a computer simulation, we find that we have non-trivial orbits of length $l = 18 = 2 \cdot 3^2$, and of length $p^{\alpha - v_p(l)} \cdot l = 3^{4-2} \cdot 18 =162$. Using Theorem \ref{thm: distinct-roots-orbit-numbers} there are $\displaystyle \frac{q-1}{l} = \frac{162}{18} = 9$ orbits of length $18$, and $\displaystyle \frac{q(q-1)}{kl} = \frac{163 \cdot 162}{162} = 163$ orbits of length $162$.
\end{Example}

%%%%%%%%%%%%%%%%%%%%%%%%%%%%%%%%%%%%%%%%%%%%%%%%%%%%%%%%%%%%%%%%%%%%%%%

A more general extension of Theorem \ref{thm: prime-power-order} is not possible. That is, given an element $\gamma_3$ of some finite abelian group whose order is not a prime power, we can always pick elements $\gamma_1$ and $\gamma_2$ such that  $\gamma_1\gamma_2 = \gamma_3$, and the orders of $\gamma_1$ and $\gamma_2$ are relatively prime. This is proved in Theorem \ref{thm: order-not-prime-power}. In particular, this means that given a $b$ such that $-b$ has a non-prime power order, there exists an $a$ such that $G = \langle Q \rangle$ has non-trivial orbits of three different lengths.   

\begin{thm}
\label{thm: order-not-prime-power}
Let $F$ be a finite abelian group. Let $\gamma_3\in F$ be such that $|\gamma_3|=r=mn$ and $\gcd(m,n)=1.$ Then there exist elements $\gamma_1$ and $\gamma_2$ in $F$ such that 
$\gamma_1 \gamma_2 = \gamma_3$ and $|\gamma_1|=m, |\gamma_2|=n$.   
\end{thm}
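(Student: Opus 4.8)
The plan is to construct $\gamma_1$ and $\gamma_2$ explicitly as powers of $\gamma_3$, working entirely inside the cyclic subgroup $\langle \gamma_3 \rangle$, which has order $r = mn$. Since $\gcd(m,n) = 1$, B\'ezout's identity furnishes integers $u, v$ with $um + vn = 1$. I would then set $\gamma_1 = \gamma_3^{\,vn}$ and $\gamma_2 = \gamma_3^{\,um}$. The required product relation is immediate, since $\gamma_1 \gamma_2 = \gamma_3^{\,vn + um} = \gamma_3^{\,1} = \gamma_3$.

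It remains to verify that these elements have the prescribed orders, and this is where the (entirely routine) content of the proof lies. Using fact (2) from the Preliminaries, $|\gamma_1| = |\gamma_3^{\,vn}| = mn / \gcd(vn, mn)$. Pulling the common factor $n$ out of the gcd gives $\gcd(vn, mn) = n\,\gcd(v,m)$, and the B\'ezout relation shows $vn \equiv 1 \pmod m$, so $v$ is a unit modulo $m$ and $\gcd(v,m) = 1$. Hence $\gcd(vn,mn) = n$ and $|\gamma_1| = m$. By the symmetric calculation, $um \equiv 1 \pmod n$ forces $\gcd(u,n) = 1$, so $\gcd(um, mn) = m$ and $|\gamma_2| = n$.

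I do not expect any genuine obstacle: the statement is essentially the Chinese Remainder decomposition $\Z/mn\Z \cong \Z/m\Z \times \Z/n\Z$ transported into the cyclic group $\langle \gamma_3 \rangle$, and the only point requiring care is the gcd bookkeeping that converts the B\'ezout coefficients into the exact orders. One could alternatively phrase the construction via that isomorphism, letting $\gamma_1$ and $\gamma_2$ correspond to the images of $\gamma_3$ under the two projections, but the explicit B\'ezout powers keep everything inside $F$ and avoid invoking the structure theorem.
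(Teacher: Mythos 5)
Your proof is correct and takes essentially the same route as the paper: your B\'ezout exponent $vn$ is exactly the solution of the congruences $k_1 \equiv 0 \pmod n$, $k_1 \equiv 1 \pmod m$ that the paper obtains via the Chinese Remainder Theorem, and your $um = 1 - vn$ matches their choice $k_2 = r+1-k_1$. The order verifications via $\gcd(vn,mn)=n$ and $\gcd(um,mn)=m$ are the same gcd computation the paper carries out with $k_1 = nt$ and $\gcd(t,m)=1$.
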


\begin{proof}
Let $g= \gamma_3$ and $\langle g \rangle$ the subgroup generated by $g$ with order $|g|= r$. Let $\gamma_1 = g^{k_1}$ where $k_1$ satisfies
\begin{align*}
k_1\equiv 0 &\mod n, \\
k_1 \equiv 1 & \mod m.
\end{align*}
The Chinese Remainder Theorem guarantees there is a unique such $k_1 \mod mn.$ \\
We claim that $|\gamma_1|=|g^{k_1}|=m$.

To see this, note that $|\gamma_1|=\dfrac{r}{\gcd(k_1,r)}.$ Since $k_1\equiv 0\mod n,$ we have $k_1=nt$ for some $t$. Moreover, since $k_1\equiv 1\mod m$ we have $nt \equiv 1\mod m$ i.e. $\gcd(t,m)=1.$ Therefore, 
\begin{eqnarray*}
|\gamma_1| &=& \dfrac{r}{\gcd(k_1,r)}, \cr 
           & & \cr
           &=&\dfrac{mn}{\gcd(nt,mn)}, \cr 
           & & \cr
           &=&\dfrac{m}{\gcd(t,m)}, \cr 
           & & \cr
           &=& m.
\end{eqnarray*}
Now choose $k_2 = r+1-k_1$, and  $\gamma_2=g^{k_2}$. Note that this choice of $\gamma_2$ satisfies the requirement $\gamma_1 \gamma_2 = \gamma_3$. We claim that $|\gamma_2|=n$. To see this, note that the order of $\gamma_2$ is 
\begin{eqnarray*} 
|\gamma_2| &=&\dfrac{r}{\gcd(r+1-k_1,r)}, \cr 
           & & \cr 
           &=& \dfrac{r}{\gcd(1-k_1,r)}.
\end{eqnarray*}
Using $k_1=nt$, we have $|\gamma_2|=\dfrac{mn}{\gcd(1-nt,mn)}$. Since $k_1 = nt\equiv 1 \mod m$, we have $m \mid 1-nt$. Moreover, note that $\gcd(1-nt, n)=1.$ Therefore, $\gcd(1-nt, mn)= m$. Thus, $|\gamma_2|=\dfrac{mn}{m}=n.$
\end{proof}

Using Theorem \ref{thm: order-not-prime-power}, given a $-b$ of non-prime power order we can pick an $a$ so that $G = \langle Q \rangle$ has three non-trivial orbit length types. We state this as the next corollary. 

\begin{cor}
Let $b$ in $\F_q^\times$ be such that $-b$ has non-prime power order. Then there exists an $a$ in $\F_q^\times$ such that $\displaystyle Q = \begin{pmatrix} a & b \\ 1 & 0 \end{pmatrix}$, and $G = \langle Q \rangle$ under its canonical action on $\F_q \times F_q$ has non-trivial orbits of three different lengths.       
\end{cor}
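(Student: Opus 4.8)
The plan is to use Theorem~\ref{thm: order-not-prime-power} to manufacture a matrix whose two eigenvalues have coprime, incomparable orders, and then to read off three distinct orbit lengths from Lemma~\ref{lem: distinct-roots-possible-lengths}. Since $-b$ has non-prime-power order $r = |-b|$, the integer $r$ has at least two distinct prime divisors, so we may write $r = mn$ with $\gcd(m,n) = 1$ and $m, n > 1$. Applying Theorem~\ref{thm: order-not-prime-power} to the abelian group $F = \F_q^\times$ with $\gamma_3 = -b$ produces elements $\gamma_1, \gamma_2 \in \F_q^\times$ satisfying $\gamma_1 \gamma_2 = -b$, $|\gamma_1| = m$, and $|\gamma_2| = n$.

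Next I would set $a = \gamma_1 + \gamma_2$ and $Q = \begin{pmatrix} a & b \\ 1 & 0 \end{pmatrix}$. By the relations in Equations~\ref{eqn: coeff-formulas-1} and \ref{eqn: coeff-formulas-2}, the characteristic polynomial $x^2 - ax - b$ of $Q$ has exactly $\gamma_1$ and $\gamma_2$ as its roots, and these are distinct because $m \neq n$ (as $\gcd(m,n) = 1$ with both $m, n > 1$). Hence Lemma~\ref{lem: distinct-roots-possible-lengths} applies, and every non-trivial orbit length is one of $|\gamma_1| = m$, $|\gamma_2| = n$, or $\lcm(m,n) = mn$. These three numbers are pairwise distinct, since $m \neq n$ while $m < mn$ and $n < mn$ because $m, n > 1$.

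It remains to confirm that all three lengths are genuinely realized and that $a$ is a unit. For the realization, I would return to the transformed-basis computation in the proof of Lemma~\ref{lem: distinct-roots-possible-lengths}: the $q-1$ vectors $v$ with $P^{-1}v$ on the first coordinate axis give $(q-1)/m \geq 1$ orbits of length $m$; those on the second axis give $(q-1)/n \geq 1$ orbits of length $n$; and the $(q-1)^2$ vectors with $P^{-1}v$ off both axes give $(q-1)^2/(mn) \geq 1$ orbits of length $mn$. The last inequality holds because $mn = r = |-b|$ divides $q-1$ by Lagrange's theorem (cf. Corollary~\ref{cor: length-divides-q-1}), so $mn \leq q-1 \leq (q-1)^2$. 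Thus all three lengths occur, yielding three different non-trivial orbit lengths.

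The one genuinely delicate point, and the step I expect to be the main obstacle, is verifying $a \neq 0$ so that $a \in \F_q^\times$ as the statement demands. Here I would argue that $a = 0$ is equivalent to $\gamma_2 = -\gamma_1$, i.e.\ to $\gamma_1 \gamma_2^{-1} = -1$. But $\gcd(|\gamma_1|, |\gamma_2^{-1}|) = \gcd(m,n) = 1$, so by the first of the stated facts on orders we have $|\gamma_1 \gamma_2^{-1}| = mn = r$. Since $-1$ has order at most $2$ in $\F_q^\times$, while $r$, being a non-prime-power, satisfies $r \geq 6 > 2$, the equality $\gamma_1 \gamma_2^{-1} = -1$ is impossible. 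Therefore $a \neq 0$, and the matrix $Q$ constructed above realizes three distinct non-trivial orbit lengths, completing the argument.
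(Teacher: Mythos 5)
Your proof follows the paper's own argument exactly: apply Theorem~\ref{thm: order-not-prime-power} to $\gamma_3 = -b$ to obtain $\gamma_1, \gamma_2$ of coprime orders $m, n > 1$ multiplying to $-b$, and set $a = \gamma_1 + \gamma_2$ so that Lemma~\ref{lem: distinct-roots-possible-lengths} yields the three lengths $m$, $n$, $mn$. The paper's three-line proof omits both the check that all three lengths are actually realized and the check that $a \neq 0$ (which the statement requires, since it asserts $a \in \F_q^\times$); your verifications of these points --- in particular the observation that $a = 0$ would force $\gamma_1\gamma_2^{-1} = -1$, an element of order $mn = r \geq 6$ equaling one of order at most $2$ --- are correct and fill genuine gaps in the published argument.
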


\begin{proof}
Choose $\gamma_3 = -b$. Using Theorem \ref{thm: order-not-prime-power}, we can find $\gamma_1$ and $\gamma_2$ with relatively prime orders. Now pick $a = \gamma_1 + \gamma_2$. For this choice of $a$ and $b$, $G$ will have the desired property.  
\end{proof}

\section{Repeated roots over $\F_q$}
\label{sec: Repeated-roots-case}

In this section, we look at the case when $Q$ has repeated roots. As discussed in Section \ref{sec: preliminaries}, we need to consider the Jordan form of $Q$ over $\mathbb{F}_q$ given by $JDJ^{-1}$ where 
\[
D = \begin{pmatrix}
\gamma & 1 \\
0 & \gamma 
\end{pmatrix}, 
\]
and $\gamma$ is a repeated root of $p(x)$. Using the Jordan form, we show that if $\F_q$ is of characteristic $p$ then there are non-trivial orbits of length $l$ and $pl$ where $l$ is the order of the root $\gamma$ in $\F_q^\times$.  This is stated as Theorem \ref{thm: repeated-roots-orbit-lengths}. In Theorem \ref{thm: repeated-roots-number-of-orbits}, we calculate the number of orbits of each type.

\begin{thm}
\label{thm: repeated-roots-orbit-lengths}
Let $\F_q$ be a field of characteristic of $p$. Suppose that the characteristic polynomial of $Q$ has a repeated root $\gamma$ over $\F_q$. Then the non-trivial orbits of $G$ are of length $l$ and $pl$ where $l$ is the order of $\gamma$ in $\F_q$.   
\end{thm}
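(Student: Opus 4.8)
The plan is to pass to the Jordan basis and analyze the fixed-point equation directly. Writing $Q = JDJ^{-1}$ with $D$ the Jordan block of Equation \ref{eqn: Jordan-matrix}, a non-zero vector $v$ has orbit length equal to the smallest positive integer $n$ with $Q^n v = v$, equivalently $D^n w = w$ where $w = J^{-1}v = \begin{pmatrix} w_1 \\ w_2 \end{pmatrix} \neq 0$. Using the inductive formula for $D^n$ recorded in Section \ref{sec: preliminaries}, this matrix equation unpacks into the two scalar conditions $\gamma^n w_1 + n\gamma^{n-1} w_2 = w_1$ and $\gamma^n w_2 = w_2$. I would organize the argument around whether the Jordan coordinate $w_2$ vanishes.

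First I would treat the case $w_2 = 0$, so necessarily $w_1 \neq 0$. Here the second condition is automatic and the first reduces to $\gamma^n w_1 = w_1$, i.e. $\gamma^n = 1$; the least such $n$ is the order $l$ of $\gamma$ in $\F_q^\times$. Thus every non-zero vector whose coordinate $w_2$ vanishes lies in an orbit of length $l$, and such vectors exist, so this length is realized.

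Next I would treat $w_2 \neq 0$. Now the second condition forces $\gamma^n = 1$, hence $l \mid n$; substituting $\gamma^n = 1$ into the first condition leaves $n\gamma^{n-1} w_2 = 0$. Since $\gamma$ is a repeated root of $x^2 - ax - b$ with $b$ a unit, its square equals $-b \neq 0$, so $\gamma \neq 0$; together with $w_2 \neq 0$ this forces $n = 0$ in $\F_q$, i.e. $p \mid n$. The orbit length is therefore the least $n$ divisible by both $l$ and $p$, namely $\lcm(l,p)$.

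The crux, and the one genuine subtlety, is to verify that $\lcm(l,p) = pl$ rather than $l$; this is precisely where the characteristic enters. Because $l$ is the order of $\gamma$ in the group $\F_q^\times$ of order $q-1 = p^f - 1$, we have $l \mid q-1$ and hence $\gcd(l,p) = 1$, so $p \nmid l$ and $\lcm(l,p) = pl$. Combining the two cases shows that every non-trivial orbit has length $l$ or $pl$, with both lengths occurring, completing the proof. I expect the only care needed is in arguing that the condition $n\gamma^{n-1}w_2 = 0$ genuinely reduces to $p \mid n$ in the field, and then invoking $\gcd(l,p)=1$ to rule out the degenerate possibility that the two length types coincide.
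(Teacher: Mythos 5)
Your proof is correct and follows essentially the same route as the paper: pass to the Jordan basis, split into cases according to whether the second Jordan coordinate vanishes, and use $l \mid q-1$ to conclude $\gcd(l,p)=1$ and hence $\lcm(l,p)=pl$. Your added observations that $\gamma \neq 0$ (since $\gamma^2 = -b$ is a unit) and that both length types are actually realized are small refinements the paper leaves implicit.
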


\begin{proof}
Suppose $n$ is an integer such that in the transformed basis given by $J$, for an initial vector $\displaystyle \begin{pmatrix}
x_1 \\
x_0
\end{pmatrix}$ in this basis we have 
\begin{equation*}
D^n\begin{pmatrix}
x_1 \\
x_0 
\end{pmatrix} = \begin{pmatrix}
x_1 \\
x_0 
\end{pmatrix}.    
\end{equation*} 
Then, we can write this as
\begin{equation*}
\begin{pmatrix}
\gamma^n -1 & n\gamma^{n-1} \\
0 & \gamma^n -1 
\end{pmatrix} \begin{pmatrix}
x_1 \\
x_0 
\end{pmatrix} = \begin{pmatrix}
0 \\
0 
\end{pmatrix}.      
\end{equation*} 
From this, we get the two conditions
\begin{equation}
\label{eqn: repeated-roots-condition1}
(\gamma^n - 1)x_1 + n\gamma^{n-1}x_0 = 0,  
\end{equation}
and 
\begin{equation}
\label{eqn: repeated-roots-condition2}
(\gamma^{n}-1)x_0 = 0.
\end{equation} 

We need to consider two cases.
\begin{case}
If $x_0$ is equal to zero,  then Equation \ref{eqn: repeated-roots-condition1} reduces to $(\gamma^n - 1)x_1 = 0$. Since $x_1\neq 0$ for this case (otherwise, we would have the set of  zero initial conditions), the orbit length must be the smallest $n$ such that $|\gamma|$ divides $n$. Hence,  the orbit length is $|\gamma|$. 
\end{case}

\begin{case}
If $x_0$ is not equal to zero,  then Equation \ref{eqn: repeated-roots-condition2} implies that $\gamma^n - 1 =0$. Hence, Equation \ref{eqn: repeated-roots-condition1} reduces to $n\gamma^{n-1}x_0 = 0$. Since $x_0\neq 0$, we conclude that $n\gamma^{n-1} = 0$ i.e. $p \mid n$ where $p$ is the characteristic of $\F_q$. So, the orbit length $n$ is the smallest positive integer that is divisible by $p$ and $|\gamma|$. Since $|\gamma|$ must divide $q-1$, we have $\gcd(p,|\gamma|)=1$. Hence, $n$ must be equal to the $\lcm(p, |\gamma|) = p|\gamma|$. 
\end{case}
\end{proof}

\begin{Remark}
Note that since $p \nmid q-1$, we can conclude that $pl \nmid q-1$. Therefore, Corollary \ref{cor: length-divides-q-1} on the divisibility of $q-1$ by all possible orbit lengths as stated in Section \ref{sec: preliminaries}, does not carry over to the repeated roots case.  
\end{Remark}

Next, we consider the number of orbits of each type. In order to analyze this, we require the following two lemmas.
\begin{lem}
\label{lem: repeated-roots-orbits-of-lenght-l}
Every orbit containing an element $\displaystyle\begin{pmatrix}
x \\
0 
\end{pmatrix}$ for $x \neq 0$  is of the form 
\[
\left\{ \begin{pmatrix}
\gamma^kx \\
0
\end{pmatrix} \,\, \mid \,\, k\in \mathbb{N}\right\}
\]and of length $l$.      
\end{lem}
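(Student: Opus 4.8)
The plan is to work directly in the transformed basis furnished by the Jordan decomposition $Q = JDJ^{-1}$, where $D = \begin{pmatrix} \gamma & 1 \\ 0 & \gamma \end{pmatrix}$, so that the action of $G$ in this basis is by powers of $D$. The entire lemma will follow from the explicit formula
\[
D^n = \begin{pmatrix} \gamma^n & n\gamma^{n-1} \\ 0 & \gamma^n \end{pmatrix},
\]
established by induction in the preamble to this section. This is essentially the content of Case~1 of Theorem~\ref{thm: repeated-roots-orbit-lengths}, repackaged to exhibit the orbit explicitly, so no new machinery is required.

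First I would compute the action on the given vector. Applying $D^n$ to $\begin{pmatrix} x \\ 0 \end{pmatrix}$ gives $\begin{pmatrix} \gamma^n x \\ 0 \end{pmatrix}$, because the off-diagonal entry $n\gamma^{n-1}$ multiplies the zero second coordinate and contributes nothing. The key structural observation is therefore that the subspace of vectors with zero second coordinate is $D$-invariant: the orbit of $\begin{pmatrix} x \\ 0 \end{pmatrix}$ never leaves the set $\left\{ \begin{pmatrix} \gamma^k x \\ 0 \end{pmatrix} \,\mid\, k \in \mathbb{N} \right\}$, and as $k$ ranges over $\mathbb{N}$ every such vector is attained. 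This establishes that the orbit has exactly the claimed form.

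For the length, I would argue that the orbit first returns to $\begin{pmatrix} x \\ 0 \end{pmatrix}$ at the smallest positive $n$ with $\gamma^n x = x$. Since $x \neq 0$ and $\F_q$ is a field, $x$ is invertible, so this condition is equivalent to $\gamma^n = 1$; by definition of the order, the smallest such $n$ is $l = |\gamma|$. Hence the orbit length is exactly $l$, with no proper divisor of $l$ serving as a shorter period.

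There is no genuine obstacle here: the result is an immediate consequence of the triangular form of $D^n$ together with the invertibility of $x$. The only points that merit care are (i) confirming that the invariance argument shows the orbit equals the displayed set rather than merely being contained in it, and (ii) invoking the minimality built into the definition of $|\gamma|$ to rule out a shorter period. Both are routine once the formula for $D^n$ is in hand.
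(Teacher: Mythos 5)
Your proof is correct and follows essentially the same route as the paper: both compute $D^k\begin{pmatrix} x \\ 0 \end{pmatrix} = \begin{pmatrix} \gamma^k x \\ 0 \end{pmatrix}$ from the explicit formula for $D^k$ and read off the orbit and its length. Your version is slightly more careful than the paper's one-line argument, spelling out the invariance of the zero-second-coordinate subspace and the minimality of $l = |\gamma|$ via the invertibility of $x$, but no new idea is involved.
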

\begin{proof}
The lemma follows from the fact that 
\begin{eqnarray*}
\begin{pmatrix}
\gamma & 1 \\
0 & \gamma 
\end{pmatrix}^k\begin{pmatrix}
x \\
0 
\end{pmatrix} &=&  \begin{pmatrix}
\gamma^k & k\gamma^{k-1} \\
0 & \gamma^k 
\end{pmatrix}\begin{pmatrix}
x \\
0 
\end{pmatrix} \cr
& & \cr 
&=& 
\begin{pmatrix}
\gamma^kx \\
0
\end{pmatrix},
\end{eqnarray*} 
which has length $l =|\gamma|$.    
\end{proof}

\begin{lem}
\label{lem: repeated-roots-orbits-of-lenght-pl}
Any orbit containing a point of the form $\displaystyle \begin{pmatrix}
x_1 \\
x_0
\end{pmatrix}
$ where $x_0 \neq 0$, has length $pl$. 
\end{lem}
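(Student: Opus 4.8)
The plan is to work entirely in the transformed basis furnished by $J$, so that the group action is realized by the powers of $D = \begin{pmatrix} \gamma & 1 \\ 0 & \gamma \end{pmatrix}$, and then to read off directly the least positive integer $n$ for which $D^n$ fixes a point $v = \begin{pmatrix} x_1 \\ x_0 \end{pmatrix}$ with $x_0 \neq 0$. Since the orbit length is precisely this minimal return time and is an invariant of the orbit (independent of which representative we choose), and since the change of basis $J$ preserves orbit lengths, computing the return time of a single such representative suffices. At bottom this is exactly Case~2 of Theorem \ref{thm: repeated-roots-orbit-lengths}, which I would either invoke or briefly reprove in this setting.

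Concretely, I would first impose the fixed-point condition $D^n v = v$ and use the explicit formula for $D^n$ to recover the two scalar equations, namely Equation \ref{eqn: repeated-roots-condition1} and Equation \ref{eqn: repeated-roots-condition2}. Because $x_0 \neq 0$, Equation \ref{eqn: repeated-roots-condition2} forces $\gamma^n = 1$, so that $l = |\gamma|$ divides $n$. Substituting $\gamma^n = 1$ back into Equation \ref{eqn: repeated-roots-condition1} collapses it to $n\gamma^{n-1}x_0 = 0$. Here I would note that $\gamma$ is a unit: the repeated root satisfies $\gamma^2 = -b$ with $b$ a unit, whence $\gamma \neq 0$; combined with $x_0 \neq 0$, the relation $n\gamma^{n-1}x_0 = 0$ reduces to $n = 0$ in $\F_q$, i.e. the characteristic $p$ divides $n$. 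Thus the return condition is equivalent to requiring $n$ to be divisible by both $l$ and $p$ simultaneously.

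To finish, I would apply a coprimality argument: since $l$ divides $q-1$ (being the order of an element of $\F_q^\times$) while $p \nmid q-1$, we have $\gcd(l,p)=1$, so $\lcm(l,p) = pl$, and the minimal admissible $n$ is $pl$. Hence any orbit through a point with $x_0 \neq 0$ has length $pl$. There is no serious obstacle; the only points meriting care are the verification that $\gamma \neq 0$ (so that the off-diagonal term is genuinely non-vanishing) and the clean simultaneous use of both divisibility conditions, for which the hypothesis $x_0 \neq 0$ is exactly what guarantees a nontrivial contribution from the off-diagonal entry of $D^n$ and thereby activates the second constraint.
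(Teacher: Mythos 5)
Your proof is correct and follows essentially the same route as the paper: both work in the Jordan basis and read off the two scalar conditions from the explicit formula for $D^n$, with the off-diagonal term $n\gamma^{n-1}x_0$ forcing $p \mid n$ once $x_0 \neq 0$. The only cosmetic difference is that you argue directly, recomputing Case 2 of Theorem \ref{thm: repeated-roots-orbit-lengths} to identify the minimal return time as $\lcm(p,l)=pl$, whereas the paper's lemma proof assumes the orbit has length $l$ and derives the contradiction $x_0=0$, relying on that theorem for the fact that the only remaining possibility is $pl$.
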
 

\begin{proof}
Assume the orbit has length $l = |\gamma|$. Then 
\begin{eqnarray*}
\begin{pmatrix}
x_1 \\
x_0
\end{pmatrix} &=& \begin{pmatrix}
\gamma & 1 \\
0 & \gamma 
\end{pmatrix}^l\begin{pmatrix}
x_1 \\
x_0
\end{pmatrix} \cr
& & \cr  
&=& \begin{pmatrix}
\gamma^lx_1 + l\gamma^{l-1}x_0\\
\gamma^l x_0
\end{pmatrix}. 
\end{eqnarray*}
Therefore, $\gamma^lx_1 + l\gamma^{l-1}x_0 = x_1 $ and so $l\gamma^{l-1}x_0 = 0$. Since $p \nmid l$, we have $x_0 =0$ a contradiction. 
\end{proof}

\begin{thm}
\label{thm: repeated-roots-number-of-orbits}
Let $\F_q$ be a finite field of characteristic of $p$. Suppose that the characteristic polynomial of $Q$ has a repeated root $\gamma$ over $\F_q$ of order $l$. Then there are $\displaystyle \frac{q-1}{l}$ orbits of length $l$ and $\displaystyle \frac{q(q-1)}{pl}$ orbits of length $pl$. 
\end{thm}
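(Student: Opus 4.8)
The plan is to mirror the counting argument used in Theorem \ref{thm: distinct-roots-orbit-numbers}, but now relying on the two structural lemmas (Lemma \ref{lem: repeated-roots-orbits-of-lenght-l} and Lemma \ref{lem: repeated-roots-orbits-of-lenght-pl}) that classify orbits according to the second coordinate $x_0$ in the Jordan basis. The key observation from Theorem \ref{thm: repeated-roots-orbit-lengths} and the two lemmas is that the orbit length of a vector is completely determined by whether its bottom entry (in the basis given by $J$) is zero or not: if $x_0 = 0$ the length is $l = |\gamma|$, and if $x_0 \neq 0$ the length is $pl$. Since $J$ is invertible, counting vectors in the Jordan basis is the same as counting vectors in the standard basis, so I may work entirely in the transformed coordinates.

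First I would count the orbits of length $l$. By Lemma \ref{lem: repeated-roots-orbits-of-lenght-l}, these are exactly the orbits consisting of vectors of the form $\displaystyle \begin{pmatrix} x \\ 0 \end{pmatrix}$ with $x \neq 0$. There are $q-1$ such nonzero vectors, each orbit has size $l$, and distinct such orbits are disjoint, so the number of orbits of length $l$ is $\displaystyle \frac{q-1}{l}$.

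Next I would count the orbits of length $pl$. By Lemma \ref{lem: repeated-roots-orbits-of-lenght-pl}, every vector with $x_0 \neq 0$ lies in an orbit of length $pl$, and conversely every orbit of length $pl$ must contain such a vector (since the vectors with $x_0 = 0$ give only the length-$l$ orbits and the trivial fixed point). The number of vectors with $x_0 \neq 0$ is $q(q-1)$: there are $q-1$ choices for $x_0$ and $q$ choices for $x_1$. Each orbit of length $pl$ accounts for $pl$ vectors; moreover I should verify that within a length-$pl$ orbit every vector has nonzero bottom entry, which follows because the Jordan action sends $\begin{pmatrix} x_1 \\ x_0 \end{pmatrix}$ to $\begin{pmatrix} \gamma x_1 + x_0 \\ \gamma x_0 \end{pmatrix}$, and $\gamma x_0 \neq 0$ whenever $x_0 \neq 0$ (as $\gamma$ is a unit). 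Hence these $q(q-1)$ vectors partition into orbits each of size $pl$, giving $\displaystyle \frac{q(q-1)}{pl}$ orbits of length $pl$.

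The argument is essentially a partition-counting bookkeeping, so I do not expect a serious obstacle; the one point requiring care is confirming that the dichotomy ``$x_0 = 0$ versus $x_0 \neq 0$'' is genuinely invariant along each orbit, so that the two vector counts $q-1$ and $q(q-1)$ cleanly separate the two orbit types with no overlap. This invariance is precisely what the computation $\gamma x_0 \neq 0$ guarantees, and together with the two lemmas it ensures the counts are exact rather than merely upper bounds.
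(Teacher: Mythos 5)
Your proposal is correct and follows essentially the same route as the paper: invoke Lemmas \ref{lem: repeated-roots-orbits-of-lenght-l} and \ref{lem: repeated-roots-orbits-of-lenght-pl} to identify the length-$l$ orbits with the $q-1$ nonzero vectors having zero bottom entry in the Jordan basis, and let the remaining $q(q-1)$ nonzero vectors account for the length-$pl$ orbits. Your explicit check that the condition $x_0 \neq 0$ is preserved along an orbit is a small extra verification the paper leaves implicit, but it does not change the argument.
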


\begin{proof}
From Lemmas \ref{lem: repeated-roots-orbits-of-lenght-l} and \ref{lem: repeated-roots-orbits-of-lenght-pl}, the orbits of length $l$ contain only non-zero elements of the form  $\displaystyle\begin{pmatrix}
x \\
0
\end{pmatrix}$ and vice-versa. Since there are $q-1$ such elements, we conclude that the number of orbits of length $l$ is equal to $\displaystyle \frac{q-1}{l}$. All other non-zero elements belong to orbits of length $pl$. Hence, there are 
\[
\frac{q^2 -1 - (q-1)}{pl} = \frac{q(q-1)}{pl}
\]
such orbits. 
\end{proof}

\begin{Remark}
Note that in Theorem \ref{thm: repeated-roots-number-of-orbits}, if we consider the case when $\F_q$ is a finite field of prime order i.e. $q$ is a prime, then there are an \emph{equal} number of non-trivial orbits of each type. 
\end{Remark}

\begin{Example}
\label{ex: permutation-even-char}
Let $\F_q$ be of characteristic $2$, and consider the case when $b=1$. Then $Q$ has repeated roots if and only if $Q$ is the permutation matrix $\displaystyle \begin{pmatrix} 0 & 1 \\ 1 & 0 \end{pmatrix}$. 

To see this, if $Q$ has a repeated root then from Equation \ref{eqn: coeff-formulas-1} we have $a = 2 \gamma = 0$. On the other hand, if $a=0$ and $b=1$ then from Equation \ref{eqn: coeff-formulas-2} we get $\gamma^2 = 1$ so that $\gamma = 1$ is a repeated root. By Theorem \ref{thm: repeated-roots-number-of-orbits} there are $q-1$ orbits of length $1$ and $\displaystyle \frac{q(q-1)}{2}$ orbits of length $2$.      
\end{Example}

\begin{Example}
Let $Q = \begin{pmatrix} 2 & -1 \\ 1 & 0 \end{pmatrix}$ in $\F_3$. Then its characteristic polynomial $x^2 - 2x + 1$, splits into repeated roots $\gamma_1 = \gamma_2 = 1$. Any element of the form  $\displaystyle \begin{pmatrix}
x_0 \\
x_0
\end{pmatrix}$ is a fixed point. Hence, there are two non-trivial orbits of length $1$ and two non-trivial orbits of length $3$.  
\end{Example}

\begin{Example} Consider the matrix $Q = \begin{pmatrix} 8 & -3 \\ 1 & 0 \end{pmatrix}$ over $\mathbb{F}_{13}$. Then $\gamma =4$ is a repeated root. Since the order of $\gamma$ is $6$ in $\mathbb{F}_{13}$, we have $2$ orbits of length $6$ and $2$ orbits of length $78$. 
\end{Example}

\begin{Example}
Consider the field $\F_{25}$. Let $x$ denote a generator of $\F_{25}^\times$ and define the matrix
\[
Q = \begin{pmatrix} 2x + 2 & -(x^2 + 2x + 1) \\ 1 & 0 \end{pmatrix}. 
\] 
The characteristic polynomial of $Q$ factors into the repeated root $\gamma= x + 1$.  Since the order of $\gamma$ is $12$ in $\mathbb{F}_{25}^\times$, we have an orbit structure of $2$ orbits of length $12$ and $10$ orbits of length $60$.
\end{Example}

When $b=1$ the case of repeated roots occurs only if $q \equiv 1 \mod 4$, and if $\F_q$ is of some odd characteristic $p$ we have orbit lengths of $4$ and $4p$. We state this in Proposition \ref{prop: repeated-roots-q-is-1mod4}.

\begin{prop}
\label{prop: repeated-roots-q-is-1mod4}
Suppose that $\displaystyle Q = \begin{pmatrix} a & 1 \\ 1 & 0 \end{pmatrix}$ over $\F_q$ of odd characteristic. If $Q$ has repeated roots over $\F_q$ then $q \equiv 1 \mod 4$. In this case, we have orbits of length $4$ and $4p$. 
\end{prop}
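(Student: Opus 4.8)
The plan is to reduce the entire statement to two ingredients: the classical criterion for $-1$ to be a square in a finite field of odd characteristic, and the orbit-length dichotomy already established for repeated roots in Theorem \ref{thm: repeated-roots-orbit-lengths}. The first step is to extract the algebraic meaning of ``repeated root'' for this specific $Q$. Since $b=1$, the characteristic polynomial is $p(x) = x^2 - ax - 1$, and a repeated root means $\gamma_1 = \gamma_2 = \gamma$. Substituting this into the coefficient relation $\gamma_1 \gamma_2 = -b$ from Equation \ref{eqn: coeff-formulas-2} gives $\gamma^2 = -1$. Thus the existence of a repeated root in $\F_q$ is equivalent to $-1$ being a square in $\F_q$, and the problem is now purely about quadratic residues together with the order of $\gamma$.

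Next I would invoke the standard quadratic-residue test. In odd characteristic, $\F_q^\times$ is cyclic of order $q-1$, so an element is a square precisely when its $(q-1)/2$ power equals $1$. Applying this to $-1$, one has $(-1)^{(q-1)/2} = 1$ if and only if $(q-1)/2$ is even, that is, if and only if $q \equiv 1 \mod 4$. Since we have just shown that a repeated root forces $-1$ to be a square, this yields the first assertion $q \equiv 1 \mod 4$ at once. I would note that this argument works for an arbitrary prime power $q$, not merely a prime, since only the cyclicity of $\F_q^\times$ is used.

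For the orbit lengths, the crucial point is to pin down the order of the repeated root as \emph{exactly} $4$: from $\gamma^2 = -1$ we get $\gamma^4 = 1$, while $\gamma^2 = -1 \neq 1$ in odd characteristic rules out orders $1$ and $2$. Hence $l = |\gamma| = 4$, and Theorem \ref{thm: repeated-roots-orbit-lengths} immediately delivers non-trivial orbits of lengths $l = 4$ and $pl = 4p$. I do not expect a genuine obstacle in this proof; the only step demanding care is confirming that $|\gamma|$ is exactly $4$ rather than a proper divisor, which rests on $-1 \neq 1$ and is precisely the second place where the odd characteristic hypothesis is used.
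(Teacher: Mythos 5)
Your proof is correct and follows essentially the same route as the paper's: derive $\gamma^2=-1$ from the coefficient relations, conclude $|\gamma|=4$ exactly, and apply Theorem \ref{thm: repeated-roots-orbit-lengths}. The only cosmetic differences are that you obtain $\gamma^2=-1$ directly from $\gamma_1\gamma_2=-b$ rather than via $\gamma=a/2$, and you deduce $q\equiv 1 \bmod 4$ from Euler's criterion for $-1$ being a square, whereas the paper gets it from $4=|\gamma|$ dividing $q-1$ --- a step your argument also contains, making the quadratic-residue detour redundant but harmless.
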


\begin{proof}
If $\gamma$ is a repeated root of $Q$, then from Equation \ref{eqn: coeff-formulas-1} we have $\displaystyle \gamma = \frac{a}{2}$. Therefore, using Equation \ref{eqn: coeff-formulas-2} we get $\displaystyle \gamma^2 =  \frac{a^2}{4} = -1$ so that $\gamma^4 = 1$. Hence, the order of $\gamma$ divides $4$. Since $\gamma^2 = -1$, we cannot have $\gamma$ be equal to $1$ or $-1$ and so the order of $\gamma$ cannot be $1$ or $2$ and hence must be $4$. Therefore, $4 \mid q-1$ and we conclude that $q \equiv 1 \mod 4$. By Theorem \ref{thm: repeated-roots-orbit-lengths}, we have orbits of length $4$ and $4p$.       
\end{proof}

%-----------------------------------Examples----------------------------------------% 
\begin{Example}
Consider the standard Fibonacci matrix $Q = \begin{pmatrix} 1 & 1 \\ 1 & 0 \end{pmatrix}$ over $\mathbb{F}_{5}$. There is exactly one non-trivial orbit of length $4$ and one non-trivial orbit of length $20$. 
\end{Example}

\section{Lucas Primitive Roots}
\label{sec: Lucas-Primitive-Roots}

In this section we consider $\displaystyle Q = \begin{pmatrix} a & 1 \\ 1 & 0 \end{pmatrix}$, which corresponds to the special case of sequences of the form $x_{n+1} = ax_n + x_{n-1}$ in $\F_q$. Recall from Section \ref{sec: Introduction} that  we define a root $\gamma$ of the characteristic polynomial $p(x) = x^2 - ax -1$ to be a Lucas primitive root (LPR) if $\gamma$ is a generator of $\F_q^\times$. When $a$ also equals one, we have a Fibonacci sequence and such a root is called a Fibonacci primitive root (FPR) in the literature. Shanks in \cite{MR0297695} studies FPRs$\mod p$. In particular, it is shown that if $p(x)$ has FPRs then if $p \equiv 1 \mod 4$ there are two FPRs and if $p \equiv 3 \mod 4$ and $p \neq 5$ then $p(x)$ has only one FPR. In Propositions \ref{prop: q-is-3-mod-4} and \ref{prop: q-is-1-mod-4}, we provide similar results for the existence of LPRs over any finite field $\F_q$.  

\begin{lem}
\label{lem: gamme-gives-a}
Given $\displaystyle Q = \begin{pmatrix} a & 1 \\ 1 & 0 \end{pmatrix}$ , then $\gamma$ is a root if and only if $a = \gamma - \gamma^{-1}$. Hence, if $\gamma$ is a generator of $\F_q^\times$ then $\gamma$ is also an LPR for $Q$ with $a = \gamma - \gamma^{-1}$. 
\end{lem}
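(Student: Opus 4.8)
The plan is to establish the biconditional by a direct computation with the characteristic polynomial, and then read off the ``Hence'' clause as an immediate consequence. First I would record that for $b=1$ the matrix $\displaystyle Q = \begin{pmatrix} a & 1 \\ 1 & 0 \end{pmatrix}$ has trace $a$ and determinant $-1$, so its characteristic polynomial is exactly $p(x) = x^2 - ax - 1$. Thus ``$\gamma$ is a root'' means precisely $\gamma^2 - a\gamma - 1 = 0$, and the whole lemma reduces to showing this equation is equivalent to $a = \gamma - \gamma^{-1}$.

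For the forward direction, I would assume $\gamma^2 - a\gamma - 1 = 0$. The one point that needs care is that $\gamma \neq 0$: if $\gamma = 0$ the equation collapses to $-1 = 0$, which is false in any field, so $\gamma$ is invertible and $\gamma^{-1}$ is meaningful. Dividing the relation $\gamma^2 - a\gamma - 1 = 0$ through by $\gamma$ then yields $\gamma - a - \gamma^{-1} = 0$, i.e. $a = \gamma - \gamma^{-1}$. For the reverse direction, starting from $a = \gamma - \gamma^{-1}$ (which already presupposes $\gamma \neq 0$, so that $\gamma^{-1}$ exists), I would multiply by $\gamma$ to get $a\gamma = \gamma^2 - 1$ and rearrange to $\gamma^2 - a\gamma - 1 = 0$, confirming that $\gamma$ is a root of $p(x)$.

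The ``Hence'' clause then follows with no further work. If $\gamma$ is a generator of $\F_q^\times$, it is in particular nonzero, so $\gamma^{-1}$ exists and we may set $a = \gamma - \gamma^{-1}$. By the equivalence just proved, $\gamma$ is a root of $p(x) = x^2 - ax - 1$; since $\gamma$ generates $\F_q^\times$, it is by definition a Lucas primitive root for this choice of $Q$.

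There is really no substantive obstacle here: the statement is a one-line algebraic identity dressed up as a characterization. The only subtlety worth flagging explicitly is the invertibility of $\gamma$, which is automatic in both directions (from the constant term $-1$ in the forward direction, and from the presence of $\gamma^{-1}$ in the hypothesis of the reverse direction), and which is exactly what lets the factor of $\gamma$ pass freely between the two forms of the equation.
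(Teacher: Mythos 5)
Your proof is correct and follows the same route as the paper: identify the characteristic polynomial as $x^2 - ax - 1$ and multiply/divide by $\gamma$ to pass between $\gamma^2 - a\gamma - 1 = 0$ and $a = \gamma - \gamma^{-1}$. The only difference is that you explicitly justify $\gamma \neq 0$ from the constant term, a detail the paper leaves implicit.
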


\begin{proof}
This follows from the fact that $\gamma$ is a root iff $\gamma^2 - a\gamma - 1 =0$ iff  $a = \gamma - \gamma^{-1}$ (multiplying the quadratic equation by $\gamma^{-1}$). 
\end{proof}

\begin{prop} 
\label{prop: q-is-3-mod-4}
Let $q$ be such that $q \equiv 3 \mod 4$, and hence $q = 2s + 1$ where $2 \nmid s$. Consider $\displaystyle Q = \begin{pmatrix} a & 1 \\ 1 & 0 \end{pmatrix}$ over $\F_q$. Then the following hold
\begin{enumerate}[(a)]
\item If $\gamma_1 \in \F_q$ is a root of $Q$ and $|\gamma_1| =s$, then its conjugate $\gamma_2$ is an LPR. 
\item There are exactly $\phi(s)$ values of $a$ such that $Q$ has one LPR, where $\phi$ is Euler's function. And this occurs when the conjugate root has order $s$. Moreover, it is not possible to find a $Q$ with distinct roots both of which are LPRs. 
\end{enumerate}
\end{prop}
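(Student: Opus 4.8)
The plan is to exploit the arithmetic of $q-1 = 2s$ with $s$ odd, together with the relation $\gamma_1\gamma_2 = -b = -1$ coming from Equation \ref{eqn: coeff-formulas-2}. The first facts I would record are that, since $q \equiv 3 \bmod 4$, the element $-1$ has order exactly $2$ in $\F_q^\times$ and is a quadratic non-residue, and that $\phi(q-1) = \phi(2s) = \phi(s)$ because $\gcd(2,s) = 1$. These three observations drive everything.

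For part (a), suppose $\gamma_1 \in \F_q$ is a root with $|\gamma_1| = s$. Its conjugate is $\gamma_2 = -\gamma_1^{-1}$, which lies in $\F_q$ and satisfies $\gamma_1\gamma_2 = -1$. Since $|{-1}| = 2$, $|\gamma_1^{-1}| = s$, and $\gcd(2,s) = 1$, I would apply fact $(1)$ of the preliminaries (equivalently, Lemma \ref{lem: main-residue-case}(b) with $r = 2$) to conclude $|\gamma_2| = 2s = q-1$; hence $\gamma_2$ generates $\F_q^\times$ and is an LPR.

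For part (b), I would first translate ``LPR'' into ``generator'': by Lemma \ref{lem: gamme-gives-a} each generator $\gamma$ of $\F_q^\times$ is an LPR for the matrix with $a = \gamma - \gamma^{-1}$, and conversely every LPR arises this way. Next I would rule out two LPRs. Writing the roots as powers of a fixed primitive root, any two generators have odd exponents (since their exponents are prime to $2s$), so their product has even exponent and is therefore a square; but $\gamma_1\gamma_2 = -1$ is a non-square when $q \equiv 3 \bmod 4$, a contradiction. To identify the order of the conjugate when $\gamma_2$ \emph{is} a generator, note $-1 = \gamma_2^{s}$, so $\gamma_1 = -\gamma_2^{-1} = \gamma_2^{s-1}$; since $s$ is odd one computes $\gcd(s-1, 2s) = 2$, giving $|\gamma_1| = s$, exactly the situation of part (a).

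The main work, and where I expect the only genuine subtlety, is the count. I would analyze the map $\gamma \mapsto \gamma - \gamma^{-1}$: a short calculation shows $\gamma - \gamma^{-1} = \delta - \delta^{-1}$ forces $\delta = \gamma$ or $\delta = -\gamma^{-1}$, so the two values coincide exactly on a pair of conjugate roots. Because $\gamma^2 = -1$ has no solution for $q \equiv 3 \bmod 4$, the involution $\gamma \mapsto -\gamma^{-1}$ has no fixed points and this map is genuinely two-to-one. Moreover, the conjugate $-\gamma^{-1}$ of a generator has order $s$ and is thus not a generator, so no two distinct generators are conjugate; distinct generators therefore yield distinct values of $a$, and no such $a$ secretly corresponds to two LPRs. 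Consequently the number of $a$ for which $Q$ has one LPR equals the number of generators of $\F_q^\times$, namely $\phi(2s) = \phi(s)$. The care needed is precisely in this last step: confirming that the generator set meets each fiber of the map in exactly one point, so that the count is neither inflated by double-counting nor contaminated by $a$-values with two generating roots.
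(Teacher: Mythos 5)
Your proof is correct, and part (a) matches the paper's argument exactly (both reduce to the relation $\gamma_1\gamma_2=-1$ with $|{-1}|=2$ coprime to $s$, via Lemma \ref{lem: main-residue-case}/Corollary \ref{cor: r-is-prime}). For part (b) you take a genuinely different route on the two key points. The paper parametrizes the matrices by the $\phi(s)$ elements $\gamma_1$ of order $s$, builds $a=\gamma_1-\gamma_1^{-1}$ for each, and then deduces \emph{both} the exhaustiveness of this list \emph{and} the impossibility of two LPRs from a pigeonhole count: the $\phi(s)$ constructed matrices already use up all $\phi(s)$ generators as their unique LPRs. You instead parametrize by the generators themselves and rule out two LPRs directly by a quadratic-residue argument: generators have exponents coprime to $2s$, hence odd, so a product of two generators is a square, whereas $\gamma_1\gamma_2=-1$ is a non-square precisely when $q\equiv 3 \bmod 4$. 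Your version buys a self-contained, structural reason why two LPRs cannot coexist (independent of the count), and you are more explicit than the paper about the one place where care is genuinely needed --- that the fibers of $\gamma\mapsto\gamma-\gamma^{-1}$ are the conjugate pairs $\{\gamma,-\gamma^{-1}\}$, that this involution is fixed-point-free since $\gamma^2=-1$ is insoluble, and that the conjugate of a generator has order $s$ so distinct generators give distinct values of $a$. The paper's version is shorter because the exhaustion argument does double duty, but it leaves the injectivity of the parametrization implicit; your fiber analysis fills exactly that gap. Both correctly land on the count $\phi(2s)=\phi(s)$.
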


\begin{proof}\mbox{}
\begin{enumerate}[(a)]
\item Since $q \equiv 3 \mod 4$, $\F_q$ is of odd characteristic. Moreover, by Proposition \ref{prop: repeated-roots-q-is-1mod4} $Q$ cannot have repeated roots and so it must have distinct roots $\gamma_1$ and $\gamma_2$ over $\F_q$. Now if $Q$ has a root $\gamma_1$ of odd order $s$, then the order of $\gamma_2$ is $2s$ by Corollary \ref{cor: r-is-prime}. Hence, the conjugate root $\gamma_2$ is an LPR. 
\item Note that $\F_q^\times$ is isomorphic to $\Z_{q-1}$, and so the number of generators of $\F_q^\times$ is equal to $\phi(q-1)$. Now $\phi(q-1) = \phi(2s) = \phi(2)\phi(s) = \phi(s)$. Hence, the number of generators of $\F_q^\times$ is equal to the number of elements of order $s$ in $\F_q^\times$. By Lemma \ref{lem: gamme-gives-a} to each element $\gamma_1$ in $\F_q^\times$ of order $s$, we can associate a matrix $Q$ with root $\gamma_1$ such that $a = \gamma_1 - \gamma_1^{-1}$. Using part (a), $\gamma_1$ has a conjugate root $\gamma_2$ of order $2s$. Hence, such a matrix $Q$ has exactly one LPR. And those are all the possible matrices $Q$ with LPRs, since there are only $\phi(s)$ generators to $\F_q^\times$. In particular, there is no matrix $Q$ for which both roots are LPRs. 
\end{enumerate}
\end{proof}

In the special case when $q=2p+1$ for some odd prime $p$ (for example, $p$ could be a Sophie Germain prime), we can quite easily construct all possible matrices $Q$ which have an LPR. This is stated in Corollary \ref{cor: 2p-plus-1-LPRs}.

\begin{cor}
\label{cor: 2p-plus-1-LPRs}
Suppose $\F_q$ is such that $q=2p + 1$ where $p$ is an odd prime. Let $\gamma$ be any element of $\F_q^\times$ such that $\gamma \neq 1$ or $-1$. Then $\displaystyle Q = \begin{pmatrix} a & 1 \\ 1 & 0 \end{pmatrix}$ with $a = \gamma - \gamma^{-1}$ has exactly one LPR. 
\end{cor}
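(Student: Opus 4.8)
The plan is to exploit the very restrictive structure of $\F_q^\times$ when $q = 2p+1$ with $p$ an odd prime, together with Proposition \ref{prop: q-is-3-mod-4}. First I would observe that since $p$ is odd we have $2p \equiv 2 \mod 4$, so $q \equiv 3 \mod 4$, which means Proposition \ref{prop: q-is-3-mod-4} applies with $s = p$. Moreover $\F_q^\times$ is cyclic of order $q - 1 = 2p$, whose only divisors are $1, 2, p, 2p$. The unique elements of order $1$ and $2$ are $1$ and $-1$ respectively, so any $\gamma$ with $\gamma \neq 1$ and $\gamma \neq -1$ must have order $p$ or $2p$.

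Next, by Lemma \ref{lem: gamme-gives-a}, the choice $a = \gamma - \gamma^{-1}$ makes $\gamma$ a root of the characteristic polynomial $x^2 - ax - 1$, and since the product of the roots equals $-b = -1$ by Equation \ref{eqn: coeff-formulas-2}, the conjugate root is $\gamma_2 = -\gamma^{-1}$. Because $q \equiv 3 \mod 4$, Proposition \ref{prop: repeated-roots-q-is-1mod4} rules out repeated roots, so $\gamma \neq \gamma_2$ and $Q$ genuinely has two distinct roots to examine.

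I would then split into two cases according to the order of $\gamma$, which we have pinned down to lie in $\{p, 2p\}$. If $|\gamma| = p$, then $\gamma$ has the odd order $s = p$, and part (a) of Proposition \ref{prop: q-is-3-mod-4} tells us its conjugate $\gamma_2$ has order $2p$ and is therefore an LPR; since $\gamma$ itself is not a generator of $\F_q^\times$, the matrix $Q$ has exactly one LPR. If instead $|\gamma| = 2p$, then $\gamma$ is a generator of $\F_q^\times$ and hence is itself an LPR; by part (b) of Proposition \ref{prop: q-is-3-mod-4}, no matrix $Q$ with distinct roots can have both roots be LPRs, so $\gamma_2$ is not an LPR, and again $Q$ has exactly one LPR.

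The argument is essentially bookkeeping once the order of $\gamma$ is confined to $\{p, 2p\}$; the only point requiring a little care is the case $|\gamma| = 2p$, where one must rule out the conjugate also being a generator. I would handle this via the impossibility clause in part (b) of Proposition \ref{prop: q-is-3-mod-4}, although one could equally verify it directly by computing that $\gamma_2 = -\gamma^{-1}$ has order exactly $p$ whenever $\gamma$ has order $2p$, since then $\gamma_2^{\,p} = (-1)^p \gamma^{-p} = (-\gamma^{-1})^{\,p}$ collapses using $\gamma^{p} = -1$.
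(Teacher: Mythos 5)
Your proof is correct and follows essentially the same route as the paper: observe that $q \equiv 3 \bmod 4$, pin the order of $\gamma$ down to $p$ or $2p$, and invoke Proposition \ref{prop: q-is-3-mod-4}. You are somewhat more careful than the paper in the case $|\gamma| = 2p$, where you explicitly verify (via part (b) or the direct computation $\gamma_2^p = 1$) that the conjugate $-\gamma^{-1}$ is not also a generator; this is a worthwhile detail the paper's one-line proof leaves implicit.
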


\begin{proof}
Since $\gamma \neq 1$ or $-1$, the order of $\gamma$ is not equal $1$ or $2$. Hence, the order of $\gamma$ is either $p$ or $2p$. In either case, using Proposition \ref{prop: q-is-3-mod-4} $Q$ has exactly one LPR.     
\end{proof}

Phong in \cite{MR1089524} studies LPRs of sequences $x_{n+1} = ax_n + x_{n-1}$ mod $q^e$ with initial conditions of $x_0 = 0, x_1 =1$, for a prime $q$ and integer $e$. In particular, he derives a version of Corollary \ref{cor: 2p-plus-1-LPRs} for the particular case when $e=1$ (see Corollary 3 of \cite{MR1089524}). As demonstrated in Example \ref{example: LPRs-F27}, our result is valid over any finite field satisfying the conditions of Corollary \ref{cor: 2p-plus-1-LPRs}.

\begin{Example}
Consider the finite field $\F_7$, where $q = 7 = 2 \times 3 + 1$. If we take $\gamma_1 = 2$,  then its conjugate is $\gamma_2= -\gamma_1^{-1} = 3$ and $a = 5$.  For $\gamma_1 = 4$ we have $\gamma_2 = -\gamma_1^{-1} = 5$ and $a = 2$. In each case, using Corollary \ref{cor: 2p-plus-1-LPRs} the order of $\gamma_1$ is $3$ and $\gamma_2$ is $6$. By Theorem \ref{thm: distinct-roots-orbit-numbers}, we have $2$ orbits of length $3$ and $7$ orbits of length $6$ in both cases. 
\end{Example}

% \textcolor{blue}{
% \begin{Example}
% Take $\F_{11}$, so that $q = 2 \times 5 + 1$. Let $b=1$. Apply the corollary. 
% \end{Example}
% }
% \textcolor{blue}{
% \begin{Example}
% Take $\F_{27}$, so that $q = 2 \times 13 + 1$. Let $b=1$. Apply the corollary. 

% \end{Example}
% }

\begin{Example}
\label{example: LPRs-F27}
Consider the finite field $\F_3[x]/(x^3-x+1)$ of order $27$. Note that $q= 2\times 13+1$, and hence we have $\phi(13) = 12$ values of $\gamma$ such that the conjugates are LPRs to $Q$ with $a =\gamma - \gamma^{-1}$. We list all the elements $\gamma\neq \pm 1$, the conjugates $-\gamma^{-1}$  and the corresponding $a$ for this field in Table \ref{table: LPRs-F27}. For each $a$ given in Table \ref{table: LPRs-F27}, by Corollary \ref{cor: 2p-plus-1-LPRs} the associated matrix $Q$ has exactly one LPR of order $26$ and another root of order $13$. By Theorem \ref{thm: distinct-roots-orbit-numbers}, there are $2$ orbits of length $13$ and $27$ orbits of length $26$.

\begin{table}[h!]
\centering
\begin{tabular}{||c | c||} 
\hline
$\gamma$, $-\gamma^{-1}$ & $a = \gamma - \gamma^{-1}$ \\ 
\hline 
$x, x^2 + 2$ & $x^2 + x + 2$ \\
$x^2, x^2 + x + 2$ & $2x^2 + x + 2$ \\
$x + 2, x^2 + x$ & $x^2 + 2x + 2$ \\
$x^2 + 2x, x + 1$ & $x^2 + 1$ \\
$2x^2 + x + 2, 2x^2 + 2$ & $x^2 + x + 1$ \\
$x^2 + x + 1, x^2 + 2x + 2$ & $2x^2$ \\
$2x, 2x^2 + 1$ & $2x^2 + 2x + 1$ \\
$2x^2, 2x^2 + 2x + 1$ & $x^2 + 2x + 1$ \\
$2x + 1, 2x^2 + 2x$ & $2x^2 + x + 1$ \\
$2x^2 + x, 2x + 2$ & $2x^2 + 2$ \\
$x^2 + 2x + 1, x^2 + 1$ & $2x^2 + 2x + 2$ \\
$2x^2 + 2x + 2, 2x^2 + x + 1$ & $x^2$ \\
\hline
\end{tabular} \\
\caption{Values of $\gamma$ and $a$ for $\F_q = \F_3[x]/(x^3-x+1)$.}
\label{table: LPRs-F27}
\end{table}
\end{Example}

In Proposition \ref{prop: q-is-1-mod-4}, we look at the case when $q \equiv 1 \mod 4$. 
In this case we may have a repeated root. We show that $Q$ either has two LPRs or none. 
\begin{prop}
\label{prop: q-is-1-mod-4}
Suppose $q \equiv 1 \mod 4$, and hence $q = 2^ts + 1$ where $t > 1$ and $2 \nmid s$. Let $\displaystyle Q = \begin{pmatrix} a & 1 \\ 1 & 0 \end{pmatrix}$ and suppose its characteristic polynomial splits over $\F_q$. Then either $Q$ has two LPRs or none. In the latter case, the orders of both roots of $Q$ are bounded by $2s$. 
\end{prop}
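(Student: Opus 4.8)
The plan is to exploit the special shape of the characteristic polynomial $p(x) = x^2 - ax - 1$: since $\gamma_1\gamma_2 = -b = -1$ by Equation \ref{eqn: coeff-formulas-2}, each root determines the other through $\gamma_2 = -\gamma_1^{-1}$, and $-1$ is the \emph{unique} element of order $2$ in $\F_q^\times$. First I would dispose of the repeated-root possibility: by Proposition \ref{prop: repeated-roots-q-is-1mod4} a repeated root has order exactly $4$, which generates $\F_q^\times$ only when $q-1 = 4$, i.e. $q = 5$; so apart from that boundary value the repeated case produces no LPR and is consistent with the ``none'' alternative. The substance is therefore the distinct-roots case, where the two assertions are proved separately.

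For the dichotomy ``two LPRs or none,'' the goal is to show that $\gamma_1$ generates $\F_q^\times$ if and only if $\gamma_2$ does, so that exactly one LPR is impossible. Suppose $\gamma_1$ is a generator. Then $-1 = \gamma_1^{(q-1)/2}$, whence
\[
\gamma_2 = -\gamma_1^{-1} = \gamma_1^{(q-1)/2 - 1} = \gamma_1^{\,2^{t-1}s - 1}.
\]
It then suffices to check that the exponent $2^{t-1}s - 1$ is coprime to $q-1 = 2^t s$: it is odd because $t \ge 2$ forces $2^{t-1}s$ to be even, and it is $\equiv -1 \pmod{s}$, so $\gcd(2^{t-1}s - 1,\, 2^t s) = 1$ and $\gamma_2$ is again a generator. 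The reverse implication is identical with the roles of $\gamma_1$ and $\gamma_2$ interchanged (using $\gamma_1 = -\gamma_2^{-1}$). Hence the two roots are simultaneously LPRs or simultaneously not.

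For the order bound in the ``none'' case I would reduce to Corollary \ref{cor: r-is-prime}: since $|-b| = |-1| = 2$ is prime, writing $m$ for the smaller root-order, the two orders are either both equal to a common even $m$, or else are $m$ (odd) and $2m$. In the second situation both orders are at most $2s$, because an odd divisor of $q-1 = 2^t s$ must divide $s$ and so is at most $s$. I expect the \emph{main obstacle} to be the first situation, in which both roots share an even order $m$: here the relation $\gamma_2 = -\gamma_1^{-1}$ together with the equality of orders forces $4 \mid m$, and one must rule out a root-order divisible by $4$ that is strictly smaller than $q-1$. This is the delicate step, requiring control of the $2$-part of each root-order through the decomposition $\F_q^\times \cong \Z_{2^t}\times \Z_s$ and the constraint $\gamma_1\gamma_2 = -1$; in particular one must decide whether a large $2$-adic valuation $v_2(|\gamma_i|)$ can coexist with a non-generating root, and this is precisely where the argument is most sensitive and where the bound's scope must be pinned down carefully.
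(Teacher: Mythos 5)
Your argument for the dichotomy is correct and takes a genuinely different route from the paper. The paper gets ``equal orders or orders $m$ and $2m$'' from Corollary \ref{cor: r-is-prime} (applied to $\gamma_1\gamma_2=-1$ with $|-1|=2$) and then observes that equal orders force both-or-neither to be a generator, while you write $\gamma_2=-\gamma_1^{-1}=\gamma_1^{2^{t-1}s-1}$ and check directly that the exponent is coprime to $q-1=2^ts$; both are valid, and your computation is self-contained. You also correctly catch the $q=5$ boundary case for a repeated root (its order $4$ equals $q-1$ there), which the paper's proof overlooks when it asserts flatly that a repeated root is never an LPR.

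The obstacle you flag for the bound is real, and in fact cannot be overcome: the claimed bound $2s$ is false when the two distinct roots share a common even order. Take $q=17$ (so $t=4$, $s=1$, $2s=2$) and $a=7$: then $p(x)=x^2-7x-1$ has roots $9$ and $15$ in $\F_{17}$, with $9\cdot 15=-1$ and both of order $8$; neither is an LPR, yet $8>2s$. The paper's own proof establishes the bound only in the sub-case where one root has odd order $m$ (then $m\mid s$ and the conjugate has order $2m\le 2s$) and is silent about the bound when both orders are even and equal, so the final sentence of the proposition is only justified for that odd-order sub-case. You were right not to paper over this step; no amount of control over the $2$-adic valuations of the root orders will rescue the stated bound, and the correct conclusion in the shared-even-order case is only that both roots have the same order, not that this order is at most $2s$.
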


\begin{proof}
Note that by Proposition \ref{prop: repeated-roots-q-is-1mod4}, if $\gamma$ is a repeated root of $Q$ then it would have order $4$ and $\gamma$ is not an LPR. In the case when $Q$ has distinct roots and both are of even order, their orders must be equal by Corollary \ref{cor: r-is-prime}. Hence, if any one root is an LPR then so is the other. On the other hand, if one root is of odd order then the order of the conjugate root is twice this by Corollary \ref{cor: r-is-prime}. In this case, if $\gamma$ is the root of odd order then its order must divide $s$. Hence, the orders of both roots are bounded by $2s < q-1$ and neither root is an LPR.  
\end{proof}

\begin{Example}
Consider the matrix $\displaystyle Q = \begin{pmatrix} x+1 & 1 \\ 1 & 0 \end{pmatrix}$ over the field $\F_5[x]/(x^2-2)$. The characteristic equation of $Q$ has distinct roots $4x + 2 + (x^2-2)$ and $2x + 4 + (x^2-2)$, both with an order of $24$.   
As an example, with the initial values of $x_0 = x + 1$ and $x_1 = 4x + 1$ we get a resulting orbit as follows:

\begin{align*}
\{ &\ x + 1, \ 4x + 1, \ x, \ 3, \ 4x + 3, \ 2x + 4, \ 1, \ 3x, \ 3x + 2, \ 3x + 3, 4x + 1, \ 3x + 2, \\
&\ \ 4x + 4, \ x + 4, \ 4x, \ 2, \ x + 2, \ 3x + 1, \ 4, \ 2x, 2x + 3, \ 2x + 2, \ x + 4, \ 2x + 3 \,\, \}. 
\end{align*}

%\textcolor{blue}{And how many orbits are there in total in this case? Please check that there are 26 orbits (since each has equal length of 24, we need 26 orbits in total).}
\end{Example}

\section{Irreducible over $\mathbb{F}_q$}
\label{sec: Irreducible-case}

In this section, we consider the case when the characteristic polynomial $p(x)$ of $Q$ remains irreducible over $\F_q$, and hence it splits over a quadratic extension $\F_{q^2}$.

Recall that if $p(x)$ is irreducible in $\F_q[x]$, then its splits into distinct roots $\gamma$ and $\gamma^q$ over an appropriate extension field $\F_{q^2}$. Moreover, the $q$-power map $x \mapsto x^q$ in $\F_{q^2}$ permutes the roots of $p(x)$ (see for example \cite{Conrad-finite-fields}). Using this fact, we show in Theorem \ref{thm: orbit-lengths-irreducible-case} that the roots of $p(x)$ in $\F_{q^2}$ have the same order and hence applying Lemma \ref{lem: distinct-roots-possible-lengths} we conclude that all orbits of $G$ have the same length. This approach is essentially the one taken in \cite{MR2910306}. However, the authors in \cite{MR2910306} restrict their theory to second order sequences in $\F_p$ for a prime $p$ with initial conditions of $x_0 =0, x_1 =1$, whereas we state our theorem in a more general setting.  

\begin{thm}
\label{thm: orbit-lengths-irreducible-case}
Let $\F_q$ be a finite field, and let $\displaystyle Q = \begin{pmatrix} a & b \\ 1 & 0 \end{pmatrix}$ be such that its characteristic polynomial $p(x)$ is irreducible over $\F_q$. Then the lengths of the non-trivial orbits of $G$ under its canonical action on $\F_q \times \F_q$ are of equal length.
\end{thm}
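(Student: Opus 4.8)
The plan is to exploit the fact that when $p(x)$ is irreducible over $\F_q$, its two roots in the splitting field $\F_{q^2}$ are Frobenius conjugates, namely $\gamma$ and $\gamma^q$. The central observation is that these two roots must have \emph{equal order} in $\F_{q^2}^\times$, after which Lemma \ref{lem: distinct-roots-possible-lengths} forces all three candidate orbit lengths ($|\gamma|$, $|\gamma^q|$, and $\lcm(|\gamma|,|\gamma^q|)$) to collapse to a single value.

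First I would recall that since $p(x)\in\F_q[x]$ is irreducible of degree $2$, it has no roots in $\F_q$ but splits over $\F_{q^2}$; writing $\gamma$ for one root, the other is $\gamma^q$ because the Frobenius automorphism $x\mapsto x^q$ generates $\mathrm{Gal}(\F_{q^2}/\F_q)$ and permutes the roots of any polynomial with coefficients in $\F_q$. The key step is then to show $|\gamma|=|\gamma^q|$. This follows because the map $x\mapsto x^q$ is a field automorphism of $\F_{q^2}$, and field automorphisms preserve multiplicative order: if $\gamma^m=1$ then $(\gamma^q)^m=(\gamma^m)^q=1$, and conversely applying the inverse automorphism shows the two orders divide each other, hence are equal. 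Alternatively one can argue via $\gcd(q,q^2-1)=1$, which gives $|\gamma^q|=|\gamma|/\gcd(q,|\gamma|)=|\gamma|$ directly from fact (2) in the preliminaries, since $|\gamma|$ divides $q^2-1$ and is therefore coprime to $q$.

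With $|\gamma|=|\gamma^q|=:l$ established, I would invoke Lemma \ref{lem: distinct-roots-possible-lengths}: the orbit length of any nonzero initial vector equals $|\gamma|$, $|\gamma^q|$, or $\lcm(|\gamma|,|\gamma^q|)$. All three of these equal $l$, so every non-trivial orbit has length exactly $l$. The only subtlety worth flagging is that the diagonalization in Lemma \ref{lem: distinct-roots-possible-lengths} now takes place over $\F_{q^2}$ rather than $\F_q$, but the lemma is already stated to cover exactly this situation (``distinct roots $\gamma_1,\gamma_2$ over $\F_q$ \emph{or} $\F_{q^2}$''), so no additional work is needed to transport the conclusion back to the action on $\F_q\times\F_q$.

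I expect the main (and in fact only real) obstacle to be cleanly justifying that Frobenius conjugation preserves order; everything else is a direct citation of the preliminary lemma. This is not genuinely hard, but it deserves a crisp one-line argument rather than being waved through, since it is the single fact that drives the entire conclusion.
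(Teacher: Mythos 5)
Your proposal is correct and follows essentially the same route as the paper's proof: both identify the roots as Frobenius conjugates $\gamma$ and $\gamma^q$, show their orders divide each other (hence are equal), and then invoke Lemma \ref{lem: distinct-roots-possible-lengths} to collapse all candidate orbit lengths to one value. Your alternative justification via $\gcd(q,q^2-1)=1$ and fact (2) of the preliminaries is a nice extra touch but does not change the substance of the argument.
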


\begin{proof}
Let $\gamma_1$ and $\gamma_2$ be the roots of $p(x)$ over a quadratic extension $\F_{q^2}$ of $\F_q$. Suppose that $m = \left|\gamma_1\right|$ and $n = \left|\gamma_2\right|$. Then, $\displaystyle \gamma_1^n = (\gamma_2^q)^n =(\gamma_2^n)^q=1$, so $m \mid n$. By a similar argument $n \mid m$, and hence $m=n$. Then applying Lemma \ref{lem: distinct-roots-possible-lengths} we conclude that all non-trivial orbits have equal length. 
\end{proof}

%\subsubsection{Number of orbits}

\begin{cor}
\label{cor: number-of-orbits-nonresidue-case}
The number of non-trivial orbits is equal to $\displaystyle \frac{q^2-1}{l}$, where $l$ is the length of each orbit.
\end{cor}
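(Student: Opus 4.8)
The plan is to combine Theorem~\ref{thm: orbit-lengths-irreducible-case}, which guarantees that \emph{all} non-trivial orbits have a common length $l$, with a simple counting argument. Since $G$ acts on $\F_q \times \F_q$ and the orbits partition this set, the key observation is that the non-trivial orbits partition the complement of the trivial orbit, namely the set $(\F_q \times \F_q) \setminus \{(0,0)\}$, which has cardinality $q^2 - 1$.

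First I would recall that by Theorem~\ref{thm: orbit-lengths-irreducible-case}, every non-trivial orbit has the same length $l$ (equal to the common order of the two conjugate roots $\gamma$ and $\gamma^q$ in $\F_{q^2}^\times$). Next, letting $N$ denote the number of non-trivial orbits, I would write down the partition identity
\[
N \cdot l = q^2 - 1,
\]
where the right-hand side counts all non-zero vectors in $\F_q \times \F_q$, since each such vector lies in exactly one non-trivial orbit and every non-trivial orbit contributes exactly $l$ distinct vectors. Solving for $N$ immediately gives $N = \dfrac{q^2-1}{l}$, as claimed.

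The argument is entirely routine once Theorem~\ref{thm: orbit-lengths-irreducible-case} is in hand, so there is essentially no obstacle; the only point requiring a word of care is the observation that distinct orbits are disjoint and jointly cover all non-zero vectors, which is the standard fact that a group action partitions the underlying set into orbits. The fixed point $(0,0)$ is excluded as the trivial orbit, which accounts for the subtraction of $1$ from $q^2$ in the numerator. One might also remark that $l$ necessarily divides $q^2 - 1$, consistent with $l$ being the order of an element of $\F_{q^2}^\times$ by Lagrange's theorem, so the expression $\tfrac{q^2-1}{l}$ is indeed an integer.
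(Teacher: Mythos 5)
Your proof is correct and follows essentially the same route as the paper: invoke Theorem~\ref{thm: orbit-lengths-irreducible-case} to get a single common orbit length $l$ and then count the $q^2-1$ non-zero vectors partitioned into orbits of that length. You simply spell out the partition identity $N\cdot l = q^2-1$ that the paper leaves implicit.
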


\begin{proof} 
This follows from the fact that in this case there is only one non-trivial orbit length of $l = |\gamma_1| = |\gamma_2|$. 
\end{proof}

Following a similar argument to the bound given in \cite{MR2910306} in the setting of a finite field $\F_p$ for $p$ a prime (see Theorem 8 in \cite{MR2910306}), we have the following upper bound on the 
orbit lengths. 

\begin{prop}
\label{prop: order-bound-nonresidue-case}
The orbit lengths of $G$ are bounded from above by $\displaystyle 2(q+1)|b^2|$. 
\end{prop}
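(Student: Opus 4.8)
The plan is to exploit the Frobenius relation between the two roots together with the coefficient identity $\gamma_1\gamma_2 = -b$ from Equation \ref{eqn: coeff-formulas-2}. Since $p(x)$ is irreducible over $\F_q$, it splits over $\F_{q^2}$ into the distinct roots $\gamma_1$ and $\gamma_2 = \gamma_1^q$, as recalled at the start of this section. First I would multiply these to obtain
\[
\gamma_1^{q+1} = \gamma_1\cdot \gamma_1^q = \gamma_1\gamma_2 = -b,
\]
so that $\gamma_1^{q+1} = -b$ in $\F_{q^2}^\times$. By Theorem \ref{thm: orbit-lengths-irreducible-case} there is a single non-trivial orbit length $l = |\gamma_1| = |\gamma_2|$, so it suffices to bound this one quantity.

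Next I would extract the order of $\gamma_1^{q+1}$ using known fact (2) from Section \ref{sec: distinct-roots-case}. Applying it with exponent $q+1$ gives
\[
|-b| = \left|\gamma_1^{q+1}\right| = \frac{l}{\gcd(q+1,\,l)}.
\]
Rearranging yields $l = |-b|\cdot \gcd(q+1,l)$, and since $\gcd(q+1,l)\le q+1$ this produces the preliminary bound $l \le (q+1)\,|-b|$.

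Finally I would convert $|-b|$ into $|b^2|$. Because $b^2 = (-b)^2$, fact (2) applied with exponent $2$ gives $|b^2| = |-b|/\gcd(2,|-b|)$; hence $|-b| = |b^2|$ when $|-b|$ is odd and $|-b| = 2|b^2|$ when $|-b|$ is even, so in either case $|-b|\le 2|b^2|$. Combining this with the preliminary bound gives
\[
l \le (q+1)\,|-b| \le 2(q+1)\,|b^2|,
\]
which is the claimed estimate. I do not expect a genuine obstacle here: the substance is the single identity $\gamma_1^{q+1} = -b$, and the only mildly delicate point is the clean passage from $|-b|$ to $|b^2|$, which the parity case split above handles directly. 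The remaining steps are immediate applications of the order formula and of Theorem \ref{thm: orbit-lengths-irreducible-case}.
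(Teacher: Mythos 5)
Your proof is correct and follows essentially the same route as the paper: both rest on the identity $\gamma_1^{q+1} = \gamma_1\cdot\gamma_1^{q} = \gamma_1\gamma_2 = -b$, which the paper simply squares to obtain $\gamma_1^{2(q+1)} = b^2$ and hence that the order of $\gamma_1$ divides $2(q+1)|b^2|$. Your version via the order formula even yields the marginally sharper intermediate bound $l \le (q+1)\,|-b|$, from which the stated estimate follows since $|-b| \le 2|b^2|$.
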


\begin{proof}
Let $p(x)$ have roots $\gamma_1, \gamma_2$ in $\F_{q^2}$. Then for any one of its roots say $\gamma_1$, we have 
\begin{eqnarray*}
\gamma_1^{2(q+1)} &=& (\gamma_1^q)^2\gamma_1^2 \cr
                &=& \gamma_2^2 \gamma_1^2 \cr
                &=& b^2. 
\end{eqnarray*}
Hence, $\gamma_1^{2(q+1)|b^2|} =1$. Form this, we conclude that the order of $\gamma_1$ divides $2(q+1)|b^2|$ and the inequality follows. 
\end{proof}

Proposition \ref{prop: order-bound-nonresidue-case} gives us a lower bound on the number of orbits, as stated in the next corollary. 
\begin{cor}
The number of non-trivial orbits is greater than or equal to $\displaystyle \frac{q-1}{2|b^2|}$.  
\end{cor}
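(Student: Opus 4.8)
The plan is to combine the two immediately preceding results: the exact orbit count from Corollary \ref{cor: number-of-orbits-nonresidue-case} and the upper bound on orbit length from Proposition \ref{prop: order-bound-nonresidue-case}. Since $p(x)$ is irreducible over $\F_q$, Theorem \ref{thm: orbit-lengths-irreducible-case} tells us every non-trivial orbit has the same length $l$, so by Corollary \ref{cor: number-of-orbits-nonresidue-case} the number of non-trivial orbits is exactly $\frac{q^2-1}{l}$. The number of orbits is a strictly decreasing function of $l$, so a lower bound on the orbit count follows directly from an upper bound on $l$.

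First I would invoke Proposition \ref{prop: order-bound-nonresidue-case}, which gives $l \leq 2(q+1)|b^2|$. Substituting this upper bound for $l$ into the denominator of the orbit count then yields
\[
\frac{q^2-1}{l} \geq \frac{q^2-1}{2(q+1)|b^2|}.
\]
The only remaining step is the elementary factorization $q^2 - 1 = (q-1)(q+1)$, which lets me cancel the common factor $q+1$:
\[
\frac{q^2-1}{2(q+1)|b^2|} = \frac{(q-1)(q+1)}{2(q+1)|b^2|} = \frac{q-1}{2|b^2|}.
\]
This establishes that the number of non-trivial orbits is greater than or equal to $\frac{q-1}{2|b^2|}$, as claimed.

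This corollary is essentially immediate given the machinery already developed, so there is no genuine obstacle to overcome; the content lies entirely in the prior results. The only point requiring a moment of care is the direction of the inequality: because $l$ appears in the denominator, replacing $l$ by its \emph{upper} bound produces a \emph{lower} bound on the quotient, and I would make sure the write-up states this passage explicitly so the reader sees why an upper bound on orbit length translates into a lower bound on orbit count.
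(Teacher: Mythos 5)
Your proof is correct and follows exactly the paper's route: combining the exact orbit count $\frac{q^2-1}{l}$ from Corollary \ref{cor: number-of-orbits-nonresidue-case} with the upper bound on $l$ from Proposition \ref{prop: order-bound-nonresidue-case}, then cancelling the factor $q+1$. The paper states this in one line; your write-up simply makes the cancellation and the direction of the inequality explicit.
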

\begin{proof}
This follows from Corollary \ref{cor: number-of-orbits-nonresidue-case} and Proposition \ref{prop: order-bound-nonresidue-case}. 
\end{proof}

\begin{Example}
Consider the matrix $\displaystyle Q = \begin{pmatrix} 1 & 3 \\ 1 & 0 \end{pmatrix}$ over $\F_5$. Then the discriminant $\Delta = a^2 + 4b = 3$ is a quadratic non-residue in $\F_5$. The order of $\displaystyle \gamma = \frac{1 + \sqrt{3}}{2}$ in $\F(\sqrt{3})$ is $24$, which is the upper bound given by Proposition \ref{prop: order-bound-nonresidue-case}. There is exactly one non-trivial orbit of length $24$ in this case. 
\end{Example}

\begin{Example} 
Consider $\displaystyle Q = \begin{pmatrix} 1 & \sqrt{3} \\ 1 & 0 \end{pmatrix}$ over $\F_5(\sqrt{3})$. Then the discriminant $\Delta = a^2 + 4b = 1 + 4\sqrt{3}$ is a quadratic non-residue in $\F_5(\sqrt{3})$. For, if it is a quadratic residue then we must have an $\alpha + \beta\sqrt{3}$ in $\F_5(\sqrt{3})$ such that $(\alpha + \beta\sqrt{3})^2 = 1 + 4\sqrt{3}$. This gives us a set of two equations mod $5$
\begin{eqnarray*}
\alpha \beta = 2, \cr
\alpha^2 + 3 \beta^2 = 1,
\end{eqnarray*}
which has no solution. Hence, by Theorem \ref{thm: orbit-lengths-irreducible-case} we have non-trivial orbits of all equal length. Using Sage Math, we find that there are $3$ non-trivial orbits of length $208$, which is the upper bound on the orbit length as given by Proposition \ref{prop: order-bound-nonresidue-case}. 
\end{Example}

\bibliographystyle{siam}
\bibliography{References.bib}

\end{document}